\newtheoremstyle{spaceddefinition}
  {3pt} % space above
  {6pt} % space below
  {}     % body font (upright = same as "definition")
  {}     % indent amount
  {\bfseries} % head font
  {.}    % punctuation after head
  { }    % space after head
  {}     % theorem head spec
\theoremstyle{spaceddefinition}
\newtheorem{theorem}{Theorem}[section]
\newtheorem{corollary}[theorem]{Corollary}
\newtheorem{conjecture}{Conjecture}
\newtheorem{lemma}[theorem]{Lemma}
\newtheorem{prop}[theorem]{Proposition}
\newtheorem{examp}{Example}[section]
\newtheorem{remark}{Remark}
\newtheorem{definition}{Definition}
\def\<={\leq}
\def\>={\geq}
\def\Z{\mathbb{Z}}
\def\R{\mathbb{R}}
\def\Q{\mathbb{Q}}
\def\C{\mathbb{C}}
\def\N{\mathbb{N}}
\def\A{\mathbb{A}}
\def\P{\mathbb{P}}
\newcommand{\Id}{\operatorname{Id}}
\newcommand{\GL}{\operatorname{GL}}
\newcommand{\diag}{\operatorname{diag}}
\newcommand{\set}[1]{\left\{ #1 \right\}}
\newcommand{\abs}[1]{\left| #1 \right|}
\newcommand*\pFq[6][8]{%
  \begingroup % only local assignments
  \pFqmuskip=#1mu\relax
  % make the comma math active
  \mathcode`\,=\string"8000
  % and define it to be \pFqcomma
  \begingroup\lccode`\~=`\,
  \lowercase{\endgroup\let~}\pFqcomma
  % typeset the formula
  {}_{#2}F_{#3}{\left[\genfrac..{0pt}{}{#4}{#5};#6\right]}%
  \endgroup
}
\newcommand{\pFqcomma}{\mskip\pFqmuskip}
\newcommand{\clL}{\mathcal{L}}
\newcommand{\clM}{\mathcal{M}}
\newcommand{\clZ}{\mathcal{Z}}  
\title{On Conservative Matrix Fields: Continuous Asymptotics and Arithmetic}
\author{Shachar Weinbaum\thanks{Corresponding author: shacharwein@gmail.com} \and Elyasheev Leibtag \and Rotem Kalisch \and Michael Shalyt \and Ido Kaminer\thanks{Corresponding author: kaminer@technion.ac.il}}
\date{July 2026}
\begin{document}
\maketitle
\begin{abstract}
We present the Conservative Matrix Field (CMF) as a tool for the analysis and computation of D-finite functions. We use conservative matrix fields to establish asymptotic properties of families of linear forms in periods, including (but not limited to) multivariate Mellin integrals, via a discrete Levinson-type framework due to Benzaid and Lutz. Finally, we present an experimental analysis of the families of linear forms generated by these objects and formalize the resulting observations as conjectures on their continuous asymptotic and arithmetic properties.
\end{abstract}
\section{Introduction}
% Edited note: opening motivation paragraph tightened; wording around Ap\'ery and the research motivation was clarified.
D-finite sequences, also known as P-recursive or holonomic sequences, are fundamental in many areas of mathematics, including combinatorics \cite{Stanley1999,FlajoletSedgewick2009}, transcendence \cite{Andre1989,Baker1975}, identity proofs \cite{PetkovsekWilfZeilberger1996}, and the theory of D-modules \cite{Kashiwara2003}. In 1979, Roger Ap\'ery found a ratio of two such sequences that converges to $\zeta(3)$ rapidly enough to prove its irrationality \cite{Apery1979}. His proof sparked a flurry of activity \cite{Beukers1979,Zudilin2003Catalan,Zudilin2022Zeta5,Calegari2024,Aptekarev2009,ZeilbergerZudilin2020} and motivated both deeper analysis of such ratios and experimental study of the sequences that produce them \cite{Raayoni2021,Razon2023,Elimelech2024,ChamberlandStraub2021,DoughertyBliss2021}. The tools generated to analyze such D-finite linear forms usually use information beyond the difference equations they satisfy. For example, in the analysis of linear forms coming from Mellin integrals, one uses the saddle-point method to establish asymptotic information. It would be beneficial to be able to analyze the quality of D-finite linear forms, by only analyzing the system of difference equations they satisfy.

% Edited note: CMF-introduction paragraph refined; terminology and examples from the literature were standardized.
In this paper, we present structural theorems and definitions for the Conservative Matrix Field (CMF), sometimes referred to in the literature as a discrete flat connection, a path-invariant matrix system \cite{Gosper1990}, a family of contiguity matrices \cite{MatsubaraHeoTelen2023}, or a difference Pfaffian system\cite{OharaTakayama2015}. In recent years, families of CMFs have been used to generalize rational approximations to fundamental constants \cite{Elimelech2024}, to construct a structured proof of the irrationality of Ap\'ery's constant \cite{David2023}, and to unify hundreds of formulas for $\pi$ \cite{RazUnifying2025}. We show this expressive power is since CMFs encode systems of difference equations. We provide a systematic way of getting asymptotic information from them, without invoking context dependent methods like saddle-point. We also include experimental results and powerful conjectures, that further the case for a recurrence-first approach.

% Edited note: introductory CMF example paragraph tightened; the example now reads more cleanly and points forward more clearly.
Conservative matrix fields are collections of invertible skew-commutative matrices with rational-function entries (see Definition~\ref{def:CMF}). For example, consider the following pair of matrices, used in \cite{RazUnifying2025} to unify formulas of $\pi$:
\[
M_{1}(x_1,x_2) = \begin{pmatrix}
    0 & -(2x_1+1)x_1\\
    1 & 3x_1+x_2+2
\end{pmatrix},
\qquad
M_{2}(x_1,x_2) = \begin{pmatrix}
    x_2-x_1 & -(2x_1+1)x_1\\
    1 & 2x_1+2x_2+1
\end{pmatrix}.
\]
A direct calculation shows the skew-commutativity relation (writing $\textbf{x} = (x_1,x_2)$)
\[
M_{1}(\textbf{x})M_{2}(\textbf{x}+e_1) = M_{2}(\textbf{x})M_{1}(\textbf{x}+e_2).
\]
The Conservative Matrix Field (CMF) in this case, is a collection of matrices $\clM_\textbf{v}(\textbf{x})\in\GL_2(\Q)$, for $\textbf{v}\in \Z^2$ satisfying the conditions:
\[
    \clM_{\textbf{v}+\textbf{w}}(\textbf{x}) = \clM_\textbf{v}(\textbf{x})\cdot\clM_\textbf{w}(\textbf{x}+\textbf{v}) \qquad \text{and}\qquad \clM_{e_i}(\textbf{x}) = M_i(\textbf{x})
\]
This and other useful preliminary concepts are presented in Section~\ref{sec:Preliminaries}.

In Section~\ref{sec:D-fin_CMF}, we relate CMFs to D-finite functions and thereby connect this highly general framework to the more familiar setting of multivariate Mellin integrals (see Example~\ref{ex:BeukersZeta2}).

In Sections~\ref{sec:CMF_Asymp} and \ref{sec:CMF_Ratios}, we focus on applying CMFs to the analysis of families of P-recursive linear forms. These linear forms involve constants often called Ap\'ery limits, including zeta values \cite{Zudilin2022Zeta5,Fischler2004}, multiple zeta values \cite{Brown2009}, Euler's constant $\gamma$ \cite{Aptekarev2009}, and Catalan's constant $G$ \cite{Zudilin2003Catalan}. In Sections~\ref{sec:CMF_Asymp} and \ref{sec:CMF_Ratios}, we use CMFs together with a discrete Levinson-type framework due to Benzaid and Lutz \cite{BenzaidLutz1987} to characterize the asymptotic properties of such families of linear forms. This approach applies to a broad family of P-recursive linear forms, well beyond the setting of multivariate Mellin integrals.
The resulting asymptotic characterization takes the form of a factorization of the solution matrix of the system. Namely, for a CMF $\clM$ satisfying some properties, and for a suitable $\textbf{v}\in\Z^d$, one has
\begin{equation}\label{eq:IntroAsympFactor}
    \clM_{n\textbf{v}}(\textbf{x}) = B\diag(\lambda_i^n n^{\gamma_i})\,(I+o(1))\,A \qquad (n\to \infty)
\end{equation}
For some invertible matrices $A,B$, and constants $\lambda_i, \gamma_i$. The matrix $B$ encodes the Ap\'ery limits involved in the linear forms. This factorization serves as a computationally efficient and general alternative to the Laplace method often used for multivariate integrals. It also reflects the guiding principle of this paper: put the recurrences first, and thereby systematize the search for good linear forms.

Following the theoretical results, Section~\ref{sec:Experiments} presents an experimental analysis of CMFs and the linear forms they generate. The experiments exhibit a Stokes phenomenon, which we formalize as a conjecture on the structure of the asymptotic factorization in \eqref{eq:IntroAsympFactor}; this conjecture suggests an asymptotic analog of simultaneous diagonalization for skew-commutative matrices.
Section~\ref{sec:Experiments} also studies the arithmetic of these Diophantine approximations. We observe that the quality of the approximations, measured using naive height, appears to vary continuously as a function of the initial parameters defining the linear forms. This observation opens the door to optimization tools, such as simulated annealing, for the search for good linear forms. The continuity of the irrationality measure is formalized as a conjecture as well.
\clearpage
\section{Preliminaries}\label{sec:Preliminaries}
While there are more comprehensive sources on Ore algebras and D-finite functions \cite{Kauers2023,ChyzakSalvy1998} and on Ap\'ery limits \cite{DoughertyBliss2021,ChamberlandStraub2021,KoutschanZudilin2021}, the following subsections collect the definitions, theorems, and notation used later in the paper. The final subsection on conservative matrix fields contains similarly foundational definitions and results.
\subsection{Ore Algebra}
We recall the definition of an Ore algebra from \cite{Kauers2023}.
\begin{definition}\label{def:ore_algebra}
    Let $A$ be an integral domain, let $\sigma$ be an endomorphism of $A$, and let $\delta$ be a $\sigma$-derivation; that is, $\delta:A\to A$ is linear and satisfies
    \[
        \delta(ab) = \delta(a)b + \sigma(a)\delta(b)
    \]
    for all $a,b\in A$. Define $A[\partial]$ to be the ring of polynomials in $\partial$ with noncommutative multiplication determined by
    \[
        \partial a = \sigma(a)\partial + \delta(a), \qquad a\in A.
    \]
    Then $\A\coloneqq (A[\partial],\sigma,\delta)$ is an \textit{Ore algebra} over $A$.
\end{definition}

\begin{examp}\label{ex:univar_ore_algs}
While the generality of Definition~\ref{def:ore_algebra} is useful, this work mainly uses the following two Ore algebras. Let $K$ be a field of characteristic $0$.
\begin{itemize}
    \item $(K(z)[\theta_z],\Id,z\frac{d}{dz})$, the algebra of Euler differential operators;
    \item $(K(x)[S],\sigma,0)$, where $\sigma(x)=x+1$, the algebra of recurrence operators, satisfying $Sf(x)=f(x+1)S$.
\end{itemize}
\end{examp}
Ore algebras also admit multivariate versions under suitable compatibility conditions.
\begin{definition}\label{def:multivariate_ore_algebra}
For $1\le i\le d$, let $\A_i \coloneqq (A[\partial_i],\sigma_i,\delta_i)$ be Ore algebras, and assume that the $\sigma_i$ and $\delta_i$ commute with one another. One may then define the noncommutative ring $A[\partial_1,\dots,\partial_d]$ in which each $\partial_i$ satisfies the same commutation rule with elements of $A$ as in $\A_i$, and in addition
\[
    \partial_i\partial_j = \partial_j\partial_i.
\]
Then
\[
    \A\coloneqq (A[\partial_1,\dots,\partial_d],\sigma_1,\dots,\sigma_d,\delta_1,\dots,\delta_d)
\]
is a multivariate \textit{Ore algebra} over $A$.
\end{definition}
\begin{examp}
An example of such an extension is provided by Ore Laurent polynomials. For a field $K$ of characteristic $0$, consider
\[
    \A' \coloneqq (K(x)[S,S^{-1}],\sigma,\sigma^{-1},0,0).
\]
Since $\sigma$ commutes with its inverse and both derivations are zero, the conditions of Definition~\ref{def:multivariate_ore_algebra} are satisfied.
\end{examp}
We now introduce the multivariate Ore algebras that are central to the theory of conservative matrix fields.
\begin{remark}\label{rem:mult_ore_algebra_shift_notation}
    We will be interested in the following multivariate Ore algebras:
    \[
        \text{for } \textbf{x}=(x_1,\dots,x_d),\ \textbf{z}=(z_1,\dots,z_l),\qquad
        \A = K(\textbf{x},\textbf{z})[S_{x_1},\dots,S_{x_d},\theta_{z_1},\dots,\theta_{z_l}],
    \]
    \[
        \A' = K(\textbf{x},\textbf{z})[S_{x_1},S_{x_1}^{-1},\dots,S_{x_d},S_{x_d}^{-1},\theta_{z_1},\dots,\theta_{z_l}].
    \]
    Here $S_{x_i}$ is the shift operator in $x_i$, and $\theta_{z_i}$ is the Euler differential operator in $z_i$.

    For $\textbf{v}=(v_1,\dots,v_d)$, with $\textbf{v}\in\N^d$ in $\A$ and $\textbf{v}\in\Z^d$ in $\A'$, we write
    \[
        S_{\textbf{v}}\coloneqq \prod_{i=1}^d S_{x_i}^{v_i}
    \]
    and denote by $\sigma_{\textbf{v}}$ the corresponding composition of the $\sigma_i$. Thus, for $f\in K(\textbf{x},\textbf{z})$,
    \[
        S_{\textbf{v}}f(\textbf{x},\textbf{z}) = \sigma_{\textbf{v}}(f(\textbf{x},\textbf{z}))S_{\textbf{v}} = f(\textbf{x}+\textbf{v},\textbf{z})S_{\textbf{v}},
    \]
    and in particular $S_{\textbf{v}}S_{\textbf{w}} = S_{\textbf{v}+\textbf{w}}$.
\end{remark}
\subsection{D-finite Functions}
The ring of functions $f\in F\coloneqq K(\textbf{x})[[\textbf{z}]]$ forms a left module over the Ore algebras from Remark~\ref{rem:mult_ore_algebra_shift_notation}. We denote the action by a dot: for $L\in\A$ and $f\in F$, we write $L.f$. The set of operators that annihilate $f$ is called its annihilator ideal.
\begin{definition}
    The \textit{annihilator} of a function $f(\textbf{x},\textbf{z})\in F$ in the Ore algebra $\A = K(\textbf{x},\textbf{z})[S_{x_1},\dots,S_{x_d},\theta_{z_1},\dots,\theta_{z_l}]$ is the set
    \[
        \text{ann}(f) = \set{L\in \A : L.f = 0}.
    \]
\end{definition}
\begin{remark}
    $\text{ann}(f)\subset \A$ is a left ideal of the Ore algebra $\A$.
\end{remark}
It is also useful to consider the image of a function under an Ore algebra.
\begin{definition}
The image of a function $f\in F$ under an Ore algebra $\A$ is the set
\[
    \A.f \coloneqq \set{L.f : L\in\A}.
\]
\end{definition}
\begin{definition}\label{def:D_finite}
    A \textit{D-finite function} $f$ is a function for which:
    \[
        \dim_{K(\textbf{x},\textbf{z})} \A/\text{ann}(f)
        = \dim_{K(\textbf{x},\textbf{z})} \A.f
        < \infty.
    \]
\end{definition}
\begin{remark}\label{rem:Dfin_other_defs_and_neg_shift} ${}$
\begin{itemize}
    \item The condition in Definition~\ref{def:D_finite} is equivalent to either $\dim \operatorname{span}\set{\partial_i^k.f : k\in\N}<\infty$ for all $i$, or to $K[\partial_i]\cap \text{ann}(f)\neq\emptyset$ for all $i$.
    \item For any $f\in F$, we have $S_{x_i}.f\equiv 0$ if and only if $f\equiv 0$. Hence, if $f\in F$ is D-finite and $\sum_{j=0}^r p_j S_{x_i}^j$ is the minimal-degree operator in $K[S_{x_i}]\cap \text{ann}(f)$, then $p_j\in K(\textbf{x},\textbf{z})$ and $p_0,p_r\not\equiv 0$.
    \item For such a minimal operator,
    \[
        \left(\sum_{j=0}^r p_jS_{x_i}^j\right).f = 0
        \implies
        \left(-\sum_{j=1}^r \frac{p_j}{p_0}S_{x_i}^{j}\right).f = f,
    \]
    and therefore
    \[
        \left(-\sum_{j=1}^r \sigma_i^{-1}\left(\frac{p_j}{p_0}\right)S_{x_i}^{j-1}\right).f
        = \sigma_i^{-1}(f) = S_{x_i}^{-1}.f.
    \]
    \item Consequently, for $f\in F$ D-finite over $\A$, one has $\A.f = \A'.f$.
\end{itemize}
\end{remark}
\begin{examp}
Examples of D-finite functions include:
\begin{itemize}
    \item The binomial coefficients $f(n,k)=\binom{n}{k}$ are D-finite in $\A = \Q(n,k)[S_n,S_k]$.
    \item Bessel functions such as $J_n(z)$ and $I_n(z)$ are D-finite in $\A = \Q(n,z)[S_n,\theta_z]$.
    \item Generalized hypergeometric functions $\pFq{p}{q}{x_1,\dots,x_p}{x_{p+1},\dots,x_{p+q}}{z}$ are D-finite in $\A = \Q(\textbf{x},z)[S_{x_1},\dots,S_{x_{p+q}},\theta_z]$.
\end{itemize}
\end{examp}
For the special case of the univariate Ore algebra $K(n)[S_n]$ of recurrence operators, D-finite functions are called \textit{D-finite sequences}, or sometimes \textit{holonomic} or \textit{P-recursive} sequences.
\subsection{Ap\'ery Limits and the Irrationality Measure}
\begin{definition}\label{def:AperyLimit}
    Let $L\in \Q(n)[S_n]$ be a recurrence operator of order at least $2$, and let $u_1(n),u_2(n)\in\Q^{\N}$ be two solutions of $L$. If the ratio $\frac{u_1(n)}{u_2(n)}$ converges, its limit is called an \textit{Ap\'ery limit}, and the ratio sequence itself will be referred to in this paper as a \textit{D-finite ratio}.
\end{definition}
These D-finite ratios lie at the heart of many irrationality results \cite{Beukers1979,Zudilin2003Catalan,Zudilin2022Zeta5,Aptekarev2009,ZeilbergerZudilin2020}, and Ap\'ery limits include $\pi$, $\gamma$, $e$, Catalan's constant $G$, and many other constants.
In his seminal 1979 work, Roger Ap\'ery proved the irrationality of $\zeta(3)$ \cite{Apery1979} by constructing a D-finite ratio that converges to it and satisfies Dirichlet's criterion.
\begin{theorem}
    \label{thm:dirichlet_criterion}
   (Dirichlet, 1834) Let $l\in\R$ and $p_n,q_n\in\Z\setminus\set{0}$. If $\abs{\set{\frac{p_n}{q_n}:n\in\N}} = \infty$ and
   \[
       \exists_{C,\delta>0}: \abs{\frac{p_n}{q_n}-l}<\frac{C}{\abs{q_n}^{1+\delta}},
   \]
   then $l\notin\Q$.
\end{theorem}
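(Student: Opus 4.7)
The plan is to proceed by contradiction: assume $l = a/b$ with $a \in \Z$, $b \in \Z_{>0}$, and derive that the set $\{p_n/q_n : n \in \N\}$ must be finite, contradicting the hypothesis.

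The key observation is the integrality trick. For each $n$ the quantity $b p_n - a q_n$ is an integer, so it is either zero or has absolute value at least $1$. First I would split the indices into two sets: those $n$ for which $b p_n - a q_n = 0$ (equivalently, $p_n/q_n = l$) and those for which $b p_n - a q_n \neq 0$. Since the set of values $\{p_n/q_n\}$ is infinite by hypothesis, there must be infinitely many $n$ in the second set, for otherwise all but finitely many $p_n/q_n$ would equal the single value $l$.

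For such an index $n$, I would write
\[
\left|\frac{p_n}{q_n} - l\right| = \left|\frac{b p_n - a q_n}{b q_n}\right| \geq \frac{1}{b |q_n|},
\]
and combine this with the hypothesized upper bound $|p_n/q_n - l| < C / |q_n|^{1+\delta}$ to obtain $|q_n|^{\delta} < bC$. Hence $|q_n|$ is bounded along this infinite subsequence. Since $p_n/q_n$ is forced to be close to $l$ by the hypothesis, the numerators $|p_n|$ are likewise bounded on this subsequence. Therefore the pairs $(p_n, q_n)$ take only finitely many integer values, so the ratios $p_n/q_n$ also take only finitely many values on this subsequence. Including the (at most one) extra value $l$ coming from the first set, the full collection $\{p_n/q_n : n \in \N\}$ is finite, contradicting the hypothesis. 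This yields $l \notin \Q$.

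There is no real obstacle here; the only subtlety is making sure to treat the indices with $b p_n - a q_n = 0$ separately, since on those indices the lower bound $1/(b|q_n|)$ is not available. The argument is essentially the classical one used by Apéry and Beukers to verify that a rapidly convergent D-finite ratio produces an irrationality proof, which is why Dirichlet's criterion is stated here as the fundamental bridge between the asymptotic results of the previous subsection and the Apéry-limit machinery developed in the sequel.
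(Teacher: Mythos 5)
Your argument is correct: the integrality bound $\abs{bp_n-aq_n}\geq 1$ (when nonzero) against the hypothesized upper bound forces $\abs{q_n}$, and then $\abs{p_n}$, to be bounded on the relevant indices, so only finitely many values $p_n/q_n$ can occur, contradicting the hypothesis; splitting off the indices with $p_n/q_n=l$ is exactly the right care to take. The paper itself gives no proof of this theorem — it is quoted as Dirichlet's classical 1834 criterion — and your proof is the standard one, so there is nothing to reconcile; the only cosmetic point is that the conclusion in the statement should read $l\notin\Q$ rather than $L\notin\Q$.
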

\begin{remark}
    For $l\neq 0$, the same conclusion holds if one compares against the numerators instead:
    \[
        \exists_{C,\delta>0}: \abs{\frac{p_n}{q_n}-l}<\frac{C}{\abs{p_n}^{1+\delta}}.
    \]
\end{remark}
Proving irrationality in this manner requires sequences that balance rapid convergence with slow growth in their numerators and denominators. These notions require definitions:

\begin{definition}
    \label{def:convergence_rate}
For a sequence $s_n\in\Q$, such that $s_n\to l$, its \textit{convergence rate sequence} is defined as:
\[\rho_n \coloneqq \frac{\log\abs{s_n-l}}{n}\]
The limit of $\rho_n$, if it exists, will be called the \textit{convergence rate} of the sequence ($\rho \coloneqq \lim_{n\to\infty}\rho_n$).
A sequence is said to have \textit{trivial} convergence rate if $\rho = 0$.
\end{definition}
\begin{definition}
    \label{def:limit_height}
For a sequence $s_n\in\Q$, such that $s_n\to l$, its \textit{height sequence} is defined as: 
\[\eta_n = \frac{\log\abs{H(s_n)}}{n}\]
where $H(s_n)$ is the classical/naive height, meaning the maximum of the absolute value of the reduced numerator and denominator:
\[H(c) \coloneqq \max\set{\abs{a},\abs{b}} \text{ s.t } a,b\in\Z, \gcd(a,b) = 1, \frac{a}{b} = c.\]
The limit of $\eta_n$, if it exists, will be called the \textit{height} of the sequence ($\eta = \lim_{n\to\infty}\eta_n$). A sequence is said to have \textit{trivial} height if $\eta = 0$.
\end{definition}
\begin{definition}
    \label{def:irrationality_measure_of_seq}
For a sequence $s_n\in\Q$, such that $s_n\to l$, its \textit{irrationality measure sequence} is the sequence of real numbers $\delta_n$ satisfying:
\[\abs{s_n-l} = \frac{1}{H(s_n)^{1+\delta_n}},\]
The limit of $\delta_n$, if it exists, will be called the \textit{irrationality measure} of the sequence ($\delta \coloneqq \lim_{n\to\infty}\delta_n$). A sequence is said to have \textit{trivial} irrationality measure if $\delta = -1$.
\end{definition}
\begin{remark}\label{rem:limsup_delta}
While the existence of the limit of $\delta_n$ is often easy to verify numerically, it is frequently hard to prove. However, one may still use $\delta_n$ to invoke Dirichlet's criterion:
\[\lim\sup \delta_n>0\implies l\notin\Q\]
(assuming also $s_n$ does not repeat any element infinitely many times). 
\end{remark}
\begin{remark}\label{rem:convergence_height_delta_connection}
    These three quantities are related by
    $$\delta_n = -1-\frac{\rho_n}{\eta_n}$$
    Thus:
    \begin{itemize}
        \item If any two of the sequences $\set{\delta_n, \rho_n,\eta_n}$ have finite nontrivial limits, then the third sequence converges as well.
        \item If all three sequences converge for $s_n$, then the subsequence $s'_n \coloneqq s_{kn}$ (for $k\in\N\setminus\set{0}$) has the same limit $l$ and the same irrationality measure $\delta$, while $\rho' = k\rho$ and $\eta' = k\eta$.
        \item If the sequence $s_n$ has nontrivial convergence rate and $\liminf\eta_n<-\rho$, then by Remark~\ref{rem:limsup_delta} one has $l\notin\Q$.
    \end{itemize}
\end{remark}
The final point in Remark~\ref{rem:convergence_height_delta_connection} is the key ingredient in Ap\'ery's proof of the irrationality of $\zeta(3)$; proofs may be found in \cite{Apery1979,Beukers1979,David2023}.
\begin{theorem}\label{thm:aperys_theorem}
    Consider the following D-finite equation:
    $$(n+2)^3u(n+2)-(2n+3)(17n^2+51n+39)u(n+1) +(n+1)^3 u(n) = 0$$
and two solutions, $u_1(n), u_2(n)$ satisfying the following initial conditions:
$$u_1(0) = 0, u_1(1) = 6, u_2(0) = 1, u_2(1) = 5$$
Then the sequence $s_n = \frac{u_1(n)}{u_2(n)}$ converges to $\zeta(3)$, with convergence rate $\rho = -8\log(\sqrt{2}+1)$ and height satisfying $\limsup\eta_n\le 4\log(\sqrt{2}+1)+3$. Consequently,
\[
    \liminf\delta_n \ge \frac{4\log(\sqrt{2}+1)-3}{4\log(\sqrt{2}+1)+3}\approx 0.08\dots,
\]
which proves that $\zeta(3)\notin\Q$.
\end{theorem}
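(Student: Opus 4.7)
The plan is to combine Poincaré--Perron asymptotics (Theorem \ref{thm:poincare_perron} and its corollaries) with an explicit combinatorial identification of the two solutions and a denominator-clearing bound; this then feeds into the criterion encoded by Remark \ref{rem:convergence_height_delta_connection} and Dirichlet's Theorem \ref{thm:dirichlet_criterion}.

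\textbf{Step 1: Eigenvalues.} After dividing the recurrence by $(n+2)^3$ the coefficients tend to $a_2=1,\ a_1 = -34,\ a_0 = 1$, so the characteristic polynomial is $t^2-34t+1$ with roots $\lambda_{1,2}=17\pm 12\sqrt{2}=(\sqrt{2}\pm 1)^4$. Since $|\lambda_1|>|\lambda_2|$, Theorem \ref{thm:poincare_perron} applies, and Corollary \ref{cor:asymptotic_decomposition} gives any solution the form $u(n)=c_1\lambda_1^n f_1(n)+c_2\lambda_2^n f_2(n)$ with $f_i(n+1)/f_i(n)\to 1$.

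\textbf{Step 2: Identification of the solutions and the limit.} Using creative telescoping (or direct verification) one checks that the Apéry numbers $A_n=\sum_{k=0}^n\binom{n}{k}^2\binom{n+k}{k}^2$ satisfy the recurrence with $A_0=1,A_1=5$, so $u_2(n)=A_n$, and that
\[
u_1(n)=A_n\sum_{j=1}^n\frac{1}{j^3}+\sum_{k=1}^n\binom{n}{k}^2\binom{n+k}{k}^2\sum_{j=1}^k\frac{(-1)^{j-1}}{2j^3\binom{n}{j}\binom{n+j}{j}}
\]
satisfies the recurrence with $u_1(0)=0,u_1(1)=6$. From these closed forms, $u_1(n)/u_2(n)=\sum_{j=1}^n 1/j^3+O(1/A_n)\to\zeta(3)$. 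The error $r(n):=u_1(n)-\zeta(3)u_2(n)$ satisfies the same linear recurrence (by linearity) and tends to $0$, so by Corollary \ref{cor:asymptotic_with_first_nonzero} the $\lambda_1$-component of $r(n)$ must vanish and $r(n)\sim c'\lambda_2^n f_2(n)$. Meanwhile $u_2(n)=A_n\sim c\lambda_1^n f_1(n)$ with $c>0$, whence
\[
|s_n-\zeta(3)|=\frac{|r(n)|}{|u_2(n)|}\sim \frac{|c'|}{c}\Big(\frac{\lambda_2}{\lambda_1}\Big)^n = \frac{|c'|}{c}(\sqrt{2}+1)^{-8n},
\]
giving $\rho=-8\log(\sqrt{2}+1)$.

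\textbf{Step 3: Height bound and conclusion.} The combinatorial formula above shows that $2\,\mathrm{lcm}(1,\dots,n)^3\cdot u_1(n)\in\Z$, while $u_2(n)=A_n\in\Z$; by the Prime Number Theorem $\mathrm{lcm}(1,\dots,n)=e^{n+o(n)}$, and $|u_2(n)|=e^{n\log\lambda_1+o(n)}=e^{4n\log(\sqrt{2}+1)+o(n)}$. Hence
\[
H(s_n)\<= 2\,\mathrm{lcm}(1,\dots,n)^3\,|u_2(n)|=e^{(3+4\log(\sqrt{2}+1))n+o(n)},
\]
so $\limsup \eta_n\<=4\log(\sqrt{2}+1)+3$. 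Substituting into the identity $\delta_n=-1-\rho_n/\eta_n$ from Remark \ref{rem:convergence_height_delta_connection} yields
\[
\liminf\delta_n\>=\frac{4\log(\sqrt{2}+1)-3}{4\log(\sqrt{2}+1)+3}>0,
\]
and Dirichlet's criterion (via Remark \ref{rem:limsup_delta}) then forces $\zeta(3)\notin\Q$. The main obstacle is Step 2: the denominator-clearing claim $2\,\mathrm{lcm}(1,\dots,n)^3\,u_1(n)\in\Z$ is the deepest ingredient---it rests on a non-obvious cancellation in the inner sum that was Apéry's original combinatorial miracle---while the rest of the argument is a routine consequence of Poincaré--Perron together with the PNT.
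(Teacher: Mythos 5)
Your proposal is correct, and for everything the paper actually proves it follows the same skeleton: the characteristic polynomial $t^2-34t+1$ and Poincar\'e--Perron give $\rho=\log\abs{\lambda_2}-\log\abs{\lambda_1}=-8\log(\sqrt{2}+1)$, clearing denominators with $\mathrm{lcm}(1,\dots,n)^3$ gives $\limsup\eta_n\leq 4\log(\sqrt{2}+1)+3$, and the identity $\delta_n=-1-\rho_n/\eta_n$ together with Dirichlet's criterion finishes. The genuine difference is how the value $\zeta(3)$ enters: the paper \emph{assumes} $u_1/u_2\to\zeta(3)$ (citing Ap\'ery, Beukers, and \cite{David2023} for full proofs) and only demonstrates $\rho$ and outlines the height bound, whereas you substantiate the limit by identifying $u_2(n)$ with the Ap\'ery numbers $A_n$ and $u_1(n)$ with the companion binomial sum. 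This makes your argument more self-contained in spirit, but the two ingredients it leans on -- that these explicit sums satisfy the recurrence (creative telescoping) and that $2\,\mathrm{lcm}(1,\dots,n)^3\,u_1(n)\in\Z$ -- are precisely the deep classical facts, asserted rather than proven, so in completeness your proof sits at the same level as the paper's ``it is possible to show.'' Two small repairs are needed: the correction term in $u_1(n)/u_2(n)-\sum_{j=1}^{n}j^{-3}$ is $O(n^{-2})$, not $O(1/A_n)$ (only its vanishing matters, so the conclusion stands); and to get the exact rate you must justify that the $\lambda_1$-coefficient of $u_2$ and the $\lambda_2$-coefficient of $r(n)=u_1-\zeta(3)u_2$ are nonzero -- the first follows since $A_n\geq 1$ is increasing (a $\lambda_2$-dominated solution would decay), and the second is secured, as in the paper, by linear independence of the initial conditions; also the logical order should be reversed: $r/u_2\to 0$ forces the $\lambda_1$-component of $r$ to vanish, and only then does $r\sim c'\lambda_2^n f_2(n)\to 0$ follow.
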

\subsection{The Conservative Matrix Field (CMF)}
In this and the following sections, we use the notation $S_{\textbf{v}}$ and $\sigma_{\textbf{v}}$ from Remark~\ref{rem:mult_ore_algebra_shift_notation} for $\textbf{v}\in\Z^d$. We also extend the Ore algebra to matrices of rational functions by applying the operators entrywise: for $M\in M_{r\times r}(K(\textbf{x},\textbf{z}))$, say $M=(m_{i,j})$, we set
$$\sigma(M) \coloneqq (\sigma(m_{i,j})),\delta(M) = (\delta(m_{i,j})), \partial M = \sigma(M)\partial+\delta(M).$$
Let us begin with the definition of a conservative matrix field:
\begin{definition}\label{def:CMF}
A \textit{Conservative Matrix Field} (abbreviated CMF) of dimension $d$ and rank $r$ over the field $K$ is a map
\[\clM: \Z^d \to \text{GL}_r(K(\mathbf{x})), \quad \textbf{v} \mapsto \clM_\textbf{v}\]
with the property that 
\begin{equation} \label{eq:cocycle}
 \clM_{\textbf{v}+\textbf{w}} = \clM_\textbf{v} \cdot \sigma_\textbf{v}(\clM_\textbf{w})
\end{equation} 
or equivalently, as matrices of Ore operators:
\begin{equation} \label{eq:Ore_cocycle}
 \clM_{\textbf{v}+\textbf{w}}S_{\textbf{v}+\textbf{w}} = \clM_\textbf{v} S_\textbf{v} \clM_\textbf{w}S_\textbf{w}
\end{equation} 
for any $\textbf{v},\textbf{w} \in \Z^d$. We denote the set of all such conservative matrix fields by $\clZ^1(\Z^d,\text{GL}_r(K(\mathbf{x})))$.
\end{definition}
Equation~\eqref{eq:cocycle} is known as the \textit{cocycle equation}\footnote{In fact, a conservative matrix field is an Eilenberg--MacLane 1-cocycle of $\Z^d$ with non-abelian coefficients $\text{GL}_r(K(\mathbf{x}))$.}.
For any conservative matrix field $\clM$ and any $\textbf{v}\in \Z^d$, we have $\clM_0 = \Id_r$ and $\clM_{-\textbf{v}} = (\sigma_{-\textbf{v}}(\clM_\textbf{v}))^{-1}$. 
Now, let $e_1,\ldots,e_d$ be the standard $\Z$-basis of $\Z^d$. If $\clM$ is a conservative matrix field, then, by virtue of the cocycle equation \eqref{eq:cocycle}, we have the equality 
$$
    \clM_{e_i} \cdot \sigma_i(\clM_{e_j}) = \clM_{e_i+e_j} = \clM_{e_j} \cdot \sigma_j(\clM_{e_i})
$$
for every $i,j \in \set{1,\ldots,d}$. This leads to a more practical equivalent definition of conservative matrix fields, stated in the following proposition.
\begin{prop} \label{prop:generators}
Let $M_1,\ldots,M_d \in \text{GL}_r(K(\mathbf{x}))$ be $d$ matrices that satisfy the equations
\begin{equation} \label{eq:cond_generators}
	M_i \cdot \sigma_i(M_j )= M_j \cdot \sigma_j(M_i) \quad \mbox{for all } i,j \in \set{1,\ldots,d}. 
\end{equation}
Then there exists a unique $d$-dimensional conservative matrix field $\clM:\Z^d \to \text{GL}_r(K(\mathbf{x}))$ such that
$\clM_{e_i} = M_i$
for every $i \in \set{1,\ldots,d}$. 
\end{prop}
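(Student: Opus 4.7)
The plan is to construct $\clM_v$ for all $v\in\Z^d$ from the generators $M_1,\ldots,M_d$ by iterating the cocycle equation along a coordinate path, then verify that the compatibility hypothesis \eqref{eq:cond_generators} guarantees path-independence, and finally check the cocycle identity globally. Uniqueness will be essentially tautological from the cocycle equation.

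First I would define $\clM_v$ for $v\in\N^d$ by induction on $|v|_1 = v_1+\cdots+v_d$. Set $\clM_0 = \Id_r$, and for $v\neq 0$ choose any index $i$ with $v_i\geq 1$, then put $\clM_v \coloneqq M_i \cdot \sigma_i(\clM_{v-e_i})$. The main obstacle here is showing this is independent of the choice of $i$. If $v_i,v_j\geq 1$ with $i\neq j$, then $v-e_i$ still has a nonzero $j$-coordinate, so by the inductive hypothesis I may write $\clM_{v-e_i} = M_j \sigma_j(\clM_{v-e_i-e_j})$, and similarly $\clM_{v-e_j} = M_i \sigma_i(\clM_{v-e_i-e_j})$. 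Then, using that the $\sigma_k$ pairwise commute and act entrywise,
\begin{align*}
M_i \sigma_i(\clM_{v-e_i}) &= M_i \sigma_i(M_j)\,\sigma_i\sigma_j(\clM_{v-e_i-e_j}),\\
M_j \sigma_j(\clM_{v-e_j}) &= M_j \sigma_j(M_i)\,\sigma_j\sigma_i(\clM_{v-e_i-e_j}),
\end{align*}
and the hypothesis \eqref{eq:cond_generators} makes these equal. This is the only nontrivial step.

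Next I would extend to $\Z^d$. For $v\in\Z^d$, pick any $\mathbf{N}\in\N^d$ with $v+\mathbf{N}\in\N^d$, and force $\clM_{v+\mathbf{N}} = \clM_v\cdot \sigma_v(\clM_{\mathbf{N}})$, so
\[
\clM_v \coloneqq \clM_{v+\mathbf{N}}\cdot \sigma_v(\clM_{\mathbf{N}})^{-1}.
\]
Independence from $\mathbf{N}$ reduces, by replacing $\mathbf{N}$ with $\mathbf{N}+e_i$ and applying the cocycle on $\N^d$ already established, to the identity $\clM_{v+\mathbf{N}+e_i} = \clM_{v+\mathbf{N}}\cdot \sigma_{v+\mathbf{N}}(M_i)$, which holds by the $\N^d$ construction applied to the $i$-th direction. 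Invertibility of $\clM_v$ follows because each $M_i\in\text{GL}_r(K(\mathbf{x}))$ and $\sigma_i$ sends invertibles to invertibles.

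Finally, to verify the cocycle \eqref{eq:cocycle} for arbitrary $v,w\in\Z^d$, I would pick $\mathbf{N}\in\N^d$ large enough so that $v+\mathbf{N}$, $w+\mathbf{N}$, and $v+w+2\mathbf{N}$ all lie in $\N^d$, then on $\N^d$ the cocycle equation
\[
\clM_{v+w+2\mathbf{N}} = \clM_{v+\mathbf{N}}\cdot \sigma_{v+\mathbf{N}}(\clM_{w+\mathbf{N}})
\]
holds by a short induction on $|v+\mathbf{N}|_1$ using the recursive definition, and the definition of $\clM_v,\clM_w$ via shifts by $\mathbf{N}$ then yields $\clM_{v+w}=\clM_v\cdot\sigma_v(\clM_w)$ after dividing out the $\sigma_{v+w}(\clM_{2\mathbf{N}})$ factor (which itself factors as $\sigma_{v+w}(\clM_{\mathbf{N}}\sigma_{\mathbf{N}}(\clM_{\mathbf{N}}))$ by the $\N^d$ case). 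Uniqueness is immediate: any $\clM$ satisfying the cocycle with $\clM_{e_i}=M_i$ is forced, by iterating \eqref{eq:cocycle}, to coincide with the construction above on $\N^d$ and then on $\Z^d$ via $\clM_{-v} = \sigma_{-v}(\clM_v)^{-1}$.
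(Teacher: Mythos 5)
Your proposal is correct and follows essentially the same route as the paper: define $\clM_v$ by iterating the cocycle along coordinate steps, use the generator relation \eqref{eq:cond_generators} to get path-independence, and note uniqueness is forced by \eqref{eq:cocycle}. You simply spell out in more detail (induction on $|v|_1$, explicit extension from $\N^d$ to $\Z^d$) what the paper's proof asserts in two lines.
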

\begin{proof}
Using \eqref{eq:cocycle}, one can uniquely define a CMF $\clM$ satisfying the above by setting
$$
\clM_{\textbf{v}} = \clM_{\sum a_i e_i}
= \clM_{a_1 e_1}\cdot \sigma_{a_1e_1}\bigl(\clM_{a_2 e_2}\cdot \sigma_{a_2e_2}(\cdots \clM_{a_d e_d})\dots\bigr).
$$
This is well defined because one may use \eqref{eq:cond_generators} inductively to show that \eqref{eq:cocycle} holds in general.
\end{proof}
A few examples are in order:
\begin{examp}\label{ex:Zeta3CMF}
Consider the following matrices $M_1,M_2 \in \text{GL}_2(\Q(\mathbf{x}))$:
$$M_{1} =  \begin{pmatrix}0 & -1\\\frac{\left(x_{1} + 1\right)^{3}}{x_{1}^{3}} & \frac{x_{1}^{3} + 2 x_{2} \left(2 x_{1} + 1\right) \left(x_{2} - 1\right) + \left(x_{1} + 1\right)^{3}}{x_{1}^{3}}\end{pmatrix} \quad M_{2}= \begin{pmatrix}\frac{- x_{1}^{3} + 2 x_{1}^{2} x_{2} - 2 x_{1} x_{2}^{2} + x_{2}^{3}}{x_{2}^{3}} & - \frac{x_{1}^{3}}{x_{2}^{3}}\\\frac{x_{1}^{3}}{x_{2}^{3}} & \frac{x_{1}^{3} + 2 x_{1}^{2} x_{2} + 2 x_{1} x_{2}^{2} + x_{2}^{3}}{x_{2}^{3}}\end{pmatrix}$$
These matrices satisfy $M_1\sigma_{1}(M_2) = M_2\sigma_{2}(M_1)$, and thus from Proposition \ref{prop:generators} they generate a conservative matrix field $\clM$, of dimension $2$, rank $2$, over field $\Q$. For example:
\tiny
$$\clM_{(2,0)} = M_1\sigma_{1}(M_1) = 
\begin{pmatrix}
 -\frac{\left(x_1+2\right){}^3}{\left(x_1+1\right){}^3} & -\frac{\left(2 x_1+3\right) \left(x_1 \left(x_1+3\right)+2 \left(x_2-1\right) x_2+3\right)}{\left(x_1+1\right){}^3} \\
 \frac{\left(x_1+2\right){}^3 \left(2 x_1+1\right) \left(x_1^2+x_1+2 \left(x_2-1\right) x_2+1\right)}{x_1^3 \left(x_1+1\right){}^3} & \frac{\left(2 x_1+1\right) \left(2 x_1+3\right) \left(x_1^2+x_1+2 \left(x_2-1\right) x_2+1\right) \left(x_1 \left(x_1+3\right)+2 \left(x_2-1\right) x_2+3\right)-\left(x_1+1\right){}^6}{x_1^3 \left(x_1+1\right){}^3} \\
\end{pmatrix}$$
\normalsize
$$\clM_{(0,-1)} = \sigma_{(0,-1)}(M_2)^{-1} = 
\begin{pmatrix}
 \frac{\left(x_1^2+\left(x_2-1\right) x_1+\left(x_2-1\right){}^2\right) \left(x_1+x_2-1\right)}{\left(x_2-1\right){}^3} & \frac{x_1^3}{\left(x_2-1\right){}^3} \\
 -\frac{x_1^3}{\left(x_2-1\right){}^3} & -\frac{\left(x_1^2-\left(x_2-1\right) x_1+\left(x_2-1\right){}^2\right) \left(x_1-x_2+1\right)}{\left(x_2-1\right){}^3} \\
\end{pmatrix}$$
\end{examp}
\begin{examp}\label{ex:2f1CMF_first_example}
    Consider the following matrices $M_1,M_2,M_3 \in \text{GL}_2(\Q(z)(\mathbf{x}))$: 
    $$M_1 = 
\begin{pmatrix}
 1 & \frac{x_2 z}{1-z} \\
 \frac{1}{x_1} & \frac{x_1+x_2 z-x_3+1}{x_1(1-z)} \\
\end{pmatrix}\quad M_2 = \begin{pmatrix}
 1 & \frac{x_1 z}{1-z} \\
 \frac{1}{x_2} & \frac{x_1 z+x_2-x_3+1}{x_2(1-z)} \\
\end{pmatrix} $$$$M_3 = \frac{x_3}{(x_1-x_3)(x_2-x_3)}\begin{pmatrix}
 -x_1-x_2+x_3 & x_1 x_2 \\
 \frac{1}{z}-1 & \frac{x_3 (z-1)}{z} \\
\end{pmatrix} $$
These satisfy the condition in Equation \eqref{eq:cond_generators}, and hence generate a conservative matrix field $\clM$ of dimension $3$, rank $2$, over field $\Q(z)$. In addition, for any value $z_0\in\C\setminus\set{0,1}$, substituting $z=z_0$ in $M_1,M_2,M_3$ yields a CMF of dimension $3$, rank $2$, over $\C$.
\end{examp}
\begin{examp}\label{ex:1dCMF}
Any matrix $M_1\in\text{GL}_r(K(x))$ generates a conservative matrix field of dimension $1$, rank $r$, as it immediately satisfies the condition in Equation \eqref{eq:cond_generators}.
\end{examp}
\begin{examp}\label{ex:Const3x3}
Consider the following matrices $M_1,M_2\in \text{GL}_3(\Q(\mathbf{x}))$:
$$M_{1} =\begin{pmatrix}-9 & 2 & 2\\-38 & 11 & 4\\-24 & 4 & 7\end{pmatrix}\quad M_{2} = \begin{pmatrix}\frac{119}{6} & - \frac{7}{6} & - \frac{37}{6}\\\frac{343}{6} & - \frac{11}{6} & - \frac{119}{6}\\39 & - \frac{7}{3} & -12\end{pmatrix}$$
These matrices are members of the subset $\text{GL}_3(\Q)$, and they commute. This implies:
$$M_1 \sigma_{1}(M_2) = M_1M_2 = M_2M_1 =M_2\sigma_{2}(M_1) $$
Hence the condition in \eqref{eq:cond_generators} is satisfied, and these matrices generate a CMF $\clM$ of dimension $2$ and rank $3$ over $\Q$. In this case, it is easy to calculate the closed form of $\clM$:
$$\clM_{(a,b)} = M_1^aM_2^b$$
\end{examp}
\begin{remark}\label{rem:MultiHyperTerm}
    ${}$
    \begin{itemize}
        \item The determinant of a CMF $\clM$ of dimension $d$, rank $r$ over $K$, is a CMF of dimension $d$, rank $1$ over $K$
        \item a CMF of dimension $d$, rank $1$ over $K$ is also known as a Multivariate Hypergeometric Term \cite{WilfZeilberger1992,Payne2014}.
    \end{itemize}
\end{remark}
Two more useful notions are of an \textit{evaluation} of a CMF, and of a \textit{trajectory matrix}:
\begin{definition}
For a CMF $\clM$, consider the composition with the evaluation map (where it is defined). for $\textbf{x}\in K^d\ , \ \textbf{v}\in\Z^d$
$$\text{ev}_\textbf{x}\circ \clM :\Z^d\to \text{M}_{r\times r}(K)\ ,\ \textbf{v}\mapsto \clM_\textbf{v}(\textbf{x})$$
Notice that $\clM_\textbf{v}(\textbf{x})$ can be singular, or not defined, if one of the entries of $\clM_\textbf{v}$ is not defined at $\textbf{x}$, or in case $\det \clM_\textbf{v}(\textbf{x})=0$.
\end{definition}
\begin{examp}
Using the CMF defined in Example~\ref{ex:Zeta3CMF}, we note that:
    $$\clM_{(2,0)}(1,1) = 
\begin{pmatrix}
 -\frac{27}{8} & -\frac{35}{8} \\
 \frac{243}{8} & \frac{251}{8} \\
\end{pmatrix}\quad \clM_{(2,0)}(-2,0) = 
\begin{pmatrix}
 0 & -1 \\
0 & 1\\
\end{pmatrix}\notin\text{GL}_2(\Q)$$
However, $\clM_{(0,-1)}(1,1)$ is not defined.
\end{examp}
Notice that from \eqref{eq:cocycle} we have that this is a multiplicative function:
$\sigma_\textbf{v}(\clM_\textbf{w})(\textbf{x}) = \clM_\textbf{w}(\textbf{x}+\textbf{v})$ giving:
\begin{equation}\label{eq:path_indep}
    \clM_{\textbf{v}+\textbf{w}}(\textbf{x}) = \clM_\textbf{v}(\textbf{x})\cdot\clM_\textbf{w}(\textbf{x}+\textbf{v})
\end{equation}
Geometrically, this map ($\clM_\textbf{v}(\textbf{x})$) associates a matrix with the translation from point $\textbf{x}$ to point $\textbf{x}+\textbf{v}$. This, as demonstrated in \eqref{eq:path_indep}, in a \textbf{path-independent} manner (See Figure \ref{fig:Comm_Diagram}). Hence, the resemblance to conservative vector fields, and a justification for the used terminology. 
\begin{figure}
    \centering
\begin{tikzpicture}
    % Define the size of the lattice
    \def \n {4}
    % Define the gap for arrow positioning
    \def \gapv {0.3cm}
    \def \gaph {0.5cm}
    % Define the distance between points
    \def \distance {2.6cm}
    
    % Draw diagonals
    \draw[->,violet] (1*\distance+\gapv, 2*\distance+\gapv) -- (4*\distance-\gapv, 3*\distance-\gapv) node[midway, below, sloped] {\footnotesize $\clM_{(3,1)}(1,2)$};
    \draw[->,teal] (1*\distance+\gapv, 3*\distance+\gapv) -- (2*\distance-\gapv, 4*\distance-\gapv) node[midway, below, sloped] {\footnotesize $\clM_{(1,1)}(1,3)$};
    \draw[->,orange] (3*\distance+\gapv, 2*\distance-\gapv) -- (4*\distance-\gapv, 1*\distance+\gapv) node[midway, below, sloped] {\footnotesize $\clM_{(1,-1)}(3,2)$};

    % Draw the points, arrows, and indices
    \foreach \i in {1,...,\n} {
        \foreach \j in {1,...,\n} {
            % Calculate the position
            \coordinate (p) at (\i*\distance, \j*\distance);

            % Draw the arrow to the right
            \ifnum\i<\n
                \draw[->] (\i*\distance+\gaph, \j*\distance) -- ++(\distance-2*\gaph, 0) node[midway, below] {\footnotesize $\clM_{e_1}(\i,\j)$};
            \fi

            % Draw the arrow going up
            \ifnum\j<\n
                \draw[->] (\i*\distance, \j*\distance+\gapv) -- ++(0,\distance-2*\gapv) node[midway, rotate=90, anchor=center, above] {\footnotesize $\clM_{e_2}(\i,\j)$};
            \fi

            % Add indices to the left
            \node at (\i*\distance, \j*\distance) {(\i,\j)};
        }
    }

    % Dotted arrows for infinite extension

    % Right edge → rightward
    \foreach \j in {1,...,\n} {
        \draw[dashed,->] (\n*\distance+\gaph, \j*\distance) -- ++(1cm, 0);
    }

    % Left edge → leftward
    \foreach \j in {1,...,\n} {
        \draw[dashed,->] (1*\distance-\gaph, \j*\distance) -- ++(-1cm, 0);
    }

    % Top row → upward
    \foreach \i in {1,...,\n} {
        \draw[dashed,->] (\i*\distance, \n*\distance+\gapv) -- ++(0, 1cm);
    }

    % Bottom row → downward
    \foreach \i in {1,...,\n} {
        \draw[dashed,->] (\i*\distance, 1*\distance-\gapv) -- ++(0, -1cm);
    }

\end{tikzpicture}
    \caption{The geometric interpretation of the evaluated CMF $\clM_\textbf{v}(\textbf{x})$, depicted over a subset of the lattice $\Z^2$. The black arrows encode the translations up and to the right. The colored arrows encode a subset of other possible translations. This figure is a commutative diagram.}
    \label{fig:Comm_Diagram}
\end{figure}

It is useful to examine the matrices associated with straight-line paths in $\textbf{x}+\Z^d$. We call such paths, starting at $\textbf{x}$ and advancing by $\textbf{v}$ at each step, \textit{trajectories}, and we call the associated sequence of matrices \textit{trajectory matrices}. The formal definition is as follows.
\begin{definition}
    In a CMF $\clM$ of dimension $d$ and rank $r$ over $K$, the \textit{trajectory matrix} $T_{\textbf{x},\textbf{v}}(n)$ is the matrix associated with the $n$th step, starting at $0$, along the trajectory $\textbf{x}+n\textbf{v}$:
    $$T_{\textbf{x},\textbf{v}}(n) \coloneqq \clM_\textbf{v}(\textbf{x}+n\textbf{v})\in \text{GL}_r(K(n))$$
\end{definition}
Using \eqref{eq:path_indep}, we obtain the following.
\begin{remark}
    Define matrix-product notation by
    $$\prod_{k=0}^n M_k = M_0\cdot M_1\cdots M_n$$
    so that
    \begin{equation}\label{eq:prod_of_traj}
         \clM_{n\textbf{v}}(\textbf{x}) = \prod_{k=0}^{n-1} T_{\textbf{x},\textbf{v}}(k)
    \end{equation}
\end{remark}
\begin{examp}\label{ex:Zeta3Traj}
Using the CMF defined in Example \ref{ex:Zeta3CMF}, we see that:
$$T_{(1,1),(1,0)}(n) = \begin{pmatrix}0 & -1\\\frac{\left(n + 2\right)^{3}}{\left(n + 1\right)^{3}} & \frac{\left(2 n + 3\right) \left(n^{2} + 3 n + 3\right)}{\left(n + 1\right)^{3}}\end{pmatrix}$$ $$T_{(1,1),(1,1)}(n) = \begin{pmatrix}- \frac{\left(n + 2\right)^{3}}{\left(n + 1\right)^{3}} & \frac{\left(- 2 n - 3\right) \left(3 n^{2} + 9 n + 7\right)}{\left(n + 1\right)^{3}}\\\frac{6 \left(n + 2\right)^{3}}{\left(n + 1\right)^{3}} & \frac{35 n^{3} + 159 n^{2} + 243 n + 125}{\left(n + 1\right)^{3}}\end{pmatrix}$$
\end{examp}
\begin{remark}
Note that, by Example~\ref{ex:1dCMF}, trajectory matrices generate a one-dimensional CMF. Since one-dimensional CMFs are very well studied under the guise of D-finite systems, CMFs may be viewed as higher-dimensional generalizations of D-finite systems.
\end{remark}
A final preliminary notion that requires a definition is that of a \textit{coboundary transformation}:
\begin{definition}\label{def:coboundary}
    Two CMFs $\clM^1,\clM^2$ of dimension $d$ and rank $r$ over $K$ are \textit{coboundary equivalent}, which we denote by $\clM^1 \sim \clM^2$, if there exists a matrix $A \in \text{GL}_r(K(\mathbf{x}))$ such that
    \begin{equation}
         A \cdot \clM^1_\textbf{v} = \clM^2_\textbf{v} \cdot \sigma_\textbf{v}(A)
    \end{equation}
    or equivalently:
    \begin{equation}
         A \clM^1_\textbf{v} S_\textbf{v} = \clM^2_\textbf{v} S_\textbf{v} A
    \end{equation}
    for every $\textbf{v} \in \Z^d$.\\
The matrix $A$ will be referred to as the \textit{coboundary matrix}.
\end{definition}
It is a useful exercise to verify that a coboundary transformation $\clM_\textbf{v}\mapsto A^{-1}\cdot\clM_\textbf{v}\cdot \sigma_\textbf{v}(A)$ indeed satisfies the cocycle condition \eqref{eq:cocycle}. A few examples follow.

\begin{examp}\label{ex:Zeta3CoboundUnbalanced}
The CMF from Example \ref{ex:Zeta3CMF} is coboundary equivalent to the CMF generated by:
$$\overline{M}_1 = \begin{pmatrix}
\frac{x_{2} \left(2 x_{1} + 1\right) \left(x_{2} - 1\right) + \left(x_{1} + 1\right)^{3}}{x_{1}^{3}} & \frac{x_{2}^{2} \left(2 x_{1} + 1\right) \left(x_{2} - 1\right)}{x_{1}^{3}}\\\frac{x_{1}^{3} + x_{2} \left(2 x_{1} + 1\right) \left(x_{2} - 1\right) + \left(x_{1} + 1\right)^{3}}{x_{1}^{3} x_{2}} & \frac{x_{1}^{3} + x_{2} \left(2 x_{1} + 1\right) \left(x_{2} - 1\right)}{x_{1}^{3}}
\end{pmatrix}\quad \overline{M}_2 = \begin{pmatrix}
    \frac{2 x_{1}^{2}}{x_{2}^{2}} + 1 & \frac{2 x_{1} \left(x_{2} + 1\right)}{x_{2}}\\\frac{2 x_{1} \left(x_{1}^{2} + x_{2}^{2}\right)}{x_{2}^{4}} & \frac{\left(2 x_{1}^{2} + x_{2}^{2}\right) \left(x_{2} + 1\right)}{x_{2}^{3}}
\end{pmatrix}$$
Via the coboundary matrix $A = \begin{pmatrix}
    1 & -x_2\\
    1 & x_2
\end{pmatrix}$. Indeed,
$$\overline{M}_1 = A^{-1}M_1\sigma_{1}(A)\quad\overline{M}_2 = A^{-1}M_2\sigma_{2}(A)$$
\end{examp}
\begin{examp}
If we define $\overline{\clM} \coloneqq \sigma_\textbf{w}(\clM)$, then $\overline{\clM} \sim \clM$. Indeed, taking $A = \clM_\textbf{w}$, we have
$$A\overline{\clM}_\textbf{v} =  \clM_\textbf{w} \sigma_\textbf{w}(\clM_\textbf{v}) = \clM_{\textbf{v}+\textbf{w}} = \clM_\textbf{v}\sigma_\textbf{v}(\clM_\textbf{w}) =\clM_\textbf{v}\sigma_\textbf{v}(A) $$
\end{examp}
\begin{examp}\label{ex:Zeta3TrajCompanion}
Note how coboundary transformations affect trajectory matrices. Recall the trajectory matrices calculated in Example~\ref{ex:Zeta3Traj}. Using the coboundary matrix
$$A = \begin{pmatrix}
   \frac{1}{x_{1}^{3}} & - \frac{1}{x_{2}^{3}}\\0 & \frac{x_{1}^{3} + 2 x_{1}^{2} x_{2} + 2 x_{1} x_{2}^{2} + x_{2}^{3}}{x_{1}^{3} x_{2}^{3}}
\end{pmatrix}$$
on the CMF from Example~\ref{ex:Zeta3CMF}, one gets that the trajectory matrix of the coboundary CMF $\overline{\clM}$ is
$$\overline{T}_{(1,1),(1,1)}(n) = \begin{pmatrix}
    0 & - \frac{\left(n + 1\right)^{3}}{\left(n + 2\right)^{3}}\\1 & \frac{\left(2 n + 3\right) \left(17 n^{2} + 51 n + 39\right)}{\left(n + 2\right)^{3}}
\end{pmatrix}$$
Note that this matrix is in companion form, and encodes the recurrence:
$$(n+2)^3u(n+2)-(2n+3)(17n^2+51n+39)u(n+1) +(n+1)^3 u(n) = 0$$
This is the recurrence used in Ap\'ery's proof of the irrationality of $\zeta(3)$ (see Theorem~\ref{thm:aperys_theorem}).
\end{examp}
Finally, the notions of dual CMFs and sub-CMFs are described in Appendix~\ref{app:Dual_Sub_CMF}.
\clearpage
\section{CMFs and D-finite functions}\label{sec:D-fin_CMF}
Other discussions of conservative matrix fields \cite{David2023,Elimelech2024,RazUnifying2025,Gosper1990} highlight their utility in unifying approximations of constants, combinatorial identity proofs, and irrationality proofs. However, apart from the last reference, these works only discuss CMFs of rank $r=2$. Theorem \ref{thm:Dfin_CMF} shows how to construct nontrivial conservative matrix fields of general dimension and rank using D-finite functions. Lemma \ref{lem:MatrixCoeffsDFinite}  shows that the matrix coefficients of CMFs are themselves D-finite functions, and explain how this fact can be used for their computation.

Consider a D-finite function $f(\textbf{x},\textbf{z})\in F$. As discussed in Remark~\ref{rem:Dfin_other_defs_and_neg_shift}, the image of $f$ under the Ore algebra $\A'$ is finite dimensional; say $\dim_{K(\textbf{x},\textbf{z})} \A'.f = r$. Given a basis $B = (b_1.f,\dots,b_r.f)$, with $b_i\in\A'$, of $\A'.f$ over $K(\textbf{x},\textbf{z})$, every element of the image is a $K(\textbf{x},\textbf{z})$-linear combination of $(b_1.f,\dots,b_r.f)$.
\begin{remark}${}$
\begin{itemize}
    \item For $L\in \A'$, the coordinates of $L.f$ in the basis $B$ can be computed using an appropriate Gr\"obner basis.
    \item $\sigma_\textbf{v}(B)$ is also a basis of $\A'.f$, by the invertibility of the action of $\sigma_\textbf{v}$ on $F$.
\end{itemize}
\end{remark}
\begin{definition}\label{def:basis_change_matrix}
Consider a D-finite function $f(\textbf{x},\textbf{z})\in F$ and a basis $B = (b_1.f,\dots,b_r.f)$, with $b_i\in\A'$, of $\A'.f$.\\
The \textit{basis change matrix} $\clM^f_{\textbf{v}}$ is the matrix that maps $B$ to $S_{\textbf{v}}.B$; that is, it is the unique matrix $\clM^f_\textbf{v}\in \text{GL}_r(K(\textbf{x},\textbf{z}))$ satisfying
\begin{equation}
\label{eq:dfin_basis_change}
    (b_1.f,\dots,b_r.f)\cdot \clM^f_\textbf{v} =(S_{\textbf{v}}b_1.f,\dots,S_{\textbf{v}}b_r.f) = \sigma_\textbf{v}((b_1.f,\dots,b_r.f)).
\end{equation}
\end{definition}
\begin{theorem}
\label{thm:Dfin_CMF}
Given a D-finite function $f(\textbf{x},\textbf{z})\in F$ with $\dim(\A'.f)=r$, and a basis $B$ of the image, the basis change matrices $\clM^f_\textbf{v}$ constitute a CMF of dimension $d$ and rank $r$ over the field $K(\textbf{z})$.
\end{theorem}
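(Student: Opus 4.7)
To establish that the matrices $\{\clM^f_v\}_{v \in \Z^d}$ form a CMF, I would need to verify two things: (i) each $\clM^f_v$ is invertible over $K(\textbf{x},\textbf{z})$, and (ii) the cocycle equation $\clM^f_{v+w} = \clM^f_v \cdot \sigma_v(\clM^f_w)$ holds. Both will be read off from the defining relation in Equation \eqref{eq:Dfin Basis Change}, namely $B \cdot \clM^f_v = \sigma_v(B)$ where $B$ denotes the row vector $(b_1.f,\ldots,b_r.f)$, together with three basic facts: each $\sigma_v$ is a ring automorphism of $K(\textbf{x},\textbf{z})$ (and of $F$); the shifts compose as $\sigma_{v+w}=\sigma_v\circ\sigma_w$ (since the $\sigma_i$ commute by construction of the multivariate Ore algebra); and $B$ is a $K(\textbf{x},\textbf{z})$-basis of $\mathbb{A}'.f$, so coordinates in $B$ are unique.

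For invertibility, the key is that the $K(\textbf{x},\textbf{z})$-vector space $\mathbb{A}'.f$ is preserved by the action of $S_v$, which is an element of $\mathbb{A}'$ with two-sided inverse $S_{-v}$; hence $S_v$ acts on $\mathbb{A}'.f$ as a bijection. This bijection is $\sigma_v$-semilinear, i.e.\ $S_v.(rg) = \sigma_v(r)\cdot S_v.g$ for $r\in K(\textbf{x},\textbf{z})$, $g\in\mathbb{A}'.f$. A semilinear bijection maps a basis to a basis (linear dependence of $\sigma_v(B)$ pulls back under $\sigma_{-v}$ to linear dependence of $B$). Thus $\sigma_v(B)$ is another $K(\textbf{x},\textbf{z})$-basis of $\mathbb{A}'.f$, and $\clM^f_v$, being the change-of-basis matrix between two bases of the same finite-dimensional vector space, lies in $\text{GL}_r(K(\textbf{x},\textbf{z}))$.

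For the cocycle equation, I would apply $\sigma_v$ to the defining identity $B\cdot\clM^f_w = \sigma_w(B)$. Since $\sigma_v$ is a ring automorphism it commutes entrywise with matrix multiplication, giving $\sigma_v(B)\cdot\sigma_v(\clM^f_w) = \sigma_v(\sigma_w(B)) = \sigma_{v+w}(B)$. Substituting $\sigma_v(B) = B\cdot\clM^f_v$ on the left and $\sigma_{v+w}(B) = B\cdot\clM^f_{v+w}$ on the right yields
$$B\cdot\clM^f_v\cdot\sigma_v(\clM^f_w) \;=\; B\cdot\clM^f_{v+w},$$
and because $B$ is a basis the matrix acting on it from the right is uniquely determined, so $\clM^f_{v+w} = \clM^f_v\cdot\sigma_v(\clM^f_w)$, which is exactly \eqref{eq:cocycle}. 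Alternatively, one may first carry out this computation only for $v=e_i$, $w=e_j$ to produce the compatibility condition \eqref{eq:cond_generators} for the generators $M_i \coloneqq \clM^f_{e_i}$, and then invoke Proposition \ref{prop:generators} to obtain the full CMF structure.

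The argument is essentially bookkeeping, with no serious obstacle. The one point that warrants care is that $\sigma_v$ plays two distinct roles: a shift automorphism acting on elements of $F$ (used to define and move the basis $B$), and an entrywise action on matrices with entries in $K(\textbf{x},\textbf{z})$ (appearing in the cocycle formula). The compatibility $\sigma_v(B\cdot M) = \sigma_v(B)\cdot\sigma_v(M)$ between these two roles is the one identity that must be checked explicitly; once it is in hand, the whole theorem reduces to the two line-computations above.
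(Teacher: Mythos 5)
Your proposal is correct and follows essentially the same route as the paper: the cocycle equation is verified by applying $\sigma_v$ to the defining identity $B\cdot\clM^f_w=\sigma_w(B)$ and using that $B$ is a basis to identify the resulting matrices. The invertibility argument you spell out (that $S_v$ acts as a $\sigma_v$-semilinear bijection, so $\sigma_v(B)$ is again a basis) is handled in the paper by a preceding remark rather than inside the proof, but it is the same observation.
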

\begin{proof}
We show that the matrices $\clM^f_\textbf{v}$ satisfy the cocycle condition \eqref{eq:cocycle}. Since $B$ is a basis, it is enough to verify that
$$
\sigma_{\textbf{v}+\textbf{w}}(B)
= B\cdot \clM^f_{\textbf{v}+\textbf{w}}
= B\cdot \clM^f_{\textbf{v}}\cdot \sigma_\textbf{v}(\clM^f_{\textbf{w}}).
$$
Indeed,
$$
B\cdot \clM^f_{\textbf{v}}\cdot \sigma_\textbf{v}(\clM^f_{\textbf{w}})
= \sigma_\textbf{v}(B)\cdot \sigma_\textbf{v}(\clM^f_{\textbf{w}})
= \sigma_\textbf{v}(B\cdot \clM^f_{\textbf{w}})
= \sigma_{\textbf{v}+\textbf{w}}(B).
$$
\end{proof}
This construction is fundamental to understanding CMFs and shows that they are natural mathematical objects. In particular, coboundary equivalence corresponds to a change of basis in $\A'.f$.
\begin{prop}
    A CMF $\clM$ is coboundary equivalent to a CMF generated by a D-finite function $f$ (that is, $\clM\sim\clM^f$) if and only if $\clM$ can also be generated by $f$.
\end{prop}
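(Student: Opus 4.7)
The core insight is that coboundary transformations on the CMF side are exactly change-of-basis operations on the D-finite function side. Recall that $\clM^f$ depends implicitly on a chosen basis $B = (b_1.f, \ldots, b_r.f)$ of $\A'.f$; the phrase ``$\clM$ is generated by $f$'' means $\clM = \clM^{f,B'}$ for some such basis $B'$. My proof would be a direct computation in both directions once this dictionary is made explicit.

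For the direction $(\Leftarrow)$, suppose $\clM = \clM^{f,B'}$ and $\clM^f = \clM^{f,B}$ for two bases $B, B'$ of the $K(\mathbf{x},\mathbf{z})$-vector space $\A'.f$. Let $A \in \text{GL}_r(K(\mathbf{x},\mathbf{z}))$ be the change-of-basis matrix, satisfying $B' = B \cdot A$. Applying $\sigma_v$ and using Equation \eqref{eq:Dfin Basis Change} twice yields
$$B \cdot A \cdot \clM_v \;=\; B' \cdot \clM_v \;=\; \sigma_v(B') \;=\; \sigma_v(B) \cdot \sigma_v(A) \;=\; B \cdot \clM^f_v \cdot \sigma_v(A).$$
Since $B$ is a basis, we may cancel it on the left and obtain $A \cdot \clM_v = \clM^f_v \cdot \sigma_v(A)$, which is exactly the coboundary relation $\clM \sim \clM^f$.

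For the direction $(\Rightarrow)$, suppose $\clM \sim \clM^f$ with coboundary matrix $A$, where $\clM^f = \clM^{f,B}$ for a basis $B$. I would define the new tuple $B' := B \cdot A$, so that $b'_j = \sum_i A_{ij} b_i \in \A'$, viewing $K(\mathbf{x},\mathbf{z})$ as scalars inside $\A'$. Since $A$ is invertible in $\text{GL}_r(K(\mathbf{x},\mathbf{z}))$, the entries of $B'$ span the same space as those of $B$, so $B'$ is again a basis of $\A'.f$. Then
$$\sigma_v(B') \;=\; \sigma_v(B) \cdot \sigma_v(A) \;=\; B \cdot \clM^f_v \cdot \sigma_v(A) \;=\; B \cdot A \cdot \clM_v \;=\; B' \cdot \clM_v,$$
where the third equality uses the coboundary hypothesis. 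By the uniqueness of the basis-change matrix in Definition \ref{def:basis_change_matrix}, this gives $\clM_v = \clM^{f,B'}_v$ for every $v$, so $\clM$ is generated by $f$ through the basis $B'$.

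The computation is short and symmetric; there is no substantial obstacle. The only minor point that merits explicit verification is that $B' = B \cdot A$ is a legitimate basis in the sense of Definition \ref{def:basis_change_matrix}, i.e.\ that it can be written as $(b'_1.f, \ldots, b'_r.f)$ for operators $b'_i \in \A'$. This follows because $\A'$ contains $K(\mathbf{x},\mathbf{z})$ as its ring of scalars, so the $K(\mathbf{x},\mathbf{z})$-linear combinations defined by the entries of $A$ can indeed be realized as operators acting on $f$.
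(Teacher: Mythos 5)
Your proof is correct and follows essentially the same route as the paper: the change-of-basis matrix between two bases of $\A'.f$ serves as the coboundary matrix in one direction, and in the converse the coboundary matrix $A$ is used to define the new basis $B' \coloneqq B\cdot A$. You simply spell out the converse (which the paper dispatches in one line) and verify explicitly that $B'$ is a legitimate basis of the required form, which is a welcome but minor addition.
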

\begin{proof}
First, different choices of basis of $\A'.f$ produce coboundary-equivalent CMFs. Let $B$ and $B'$ be two bases of $\A'.f$, and let the corresponding CMFs be $\clM^f$ and ${\clM^f}'$. If $A$ is the basis change matrix, so that
    $$(b_1.f,\dots,b_r.f)\cdot A = (b_1'.f,\dots,b_r'.f),$$
then
    $$(b_1.f,\dots,b_r.f)\cdot A \cdot {\clM_\textbf{v}^{f}}{}' = (b_1'.f,\dots,b_r'.f)\cdot {\clM_\textbf{v}^{f}}{}' = \sigma_\textbf{v}(B')$$
and therefore
    $$\sigma_\textbf{v}(B') = \sigma_\textbf{v}(BA) = \sigma_\textbf{v}(B)\cdot \sigma_\textbf{v}(A) = (b_1.f,\dots,b_r.f)\cdot \clM_\textbf{v}^f\cdot \sigma_\textbf{v}(A).$$
Hence
    $$A\cdot {\clM_\textbf{v}^{f}}{}' = \clM_\textbf{v}^f\cdot \sigma_\textbf{v}(A),$$
which shows that $\clM^f\sim {\clM^f}'$.

Conversely, if $\clM\sim \clM^f$ with coboundary matrix $A$, then defining $B'\coloneqq B\cdot A$ yields a basis that generates $\clM$.
\end{proof}
CMFs can also be viewed as gauge transformations.
\begin{definition}
    For D-finite $f(\textbf{x},\textbf{z})$ and a basis $B$ of $\A'.f$, define $\clM_{\theta_{z_i}}^f\in M_{r\times r}(K(\textbf{x},\textbf{z}))$ by
$$(b_1.f,\dots,b_r.f)\clM_{\theta_{z_i}}^f = (\theta_{z_i}b_1.f,\dots,\theta_{z_i}b_r.f)$$
\end{definition}
\begin{lemma}\label{lem:CMF_Gauge}
For all $\textbf{v}\in\Z^d$, $\clM_\textbf{v}^f$ is a gauge transformation matrix from the system $\theta_{z_i}.g = g\cdot\clM_{\theta_{z_i}}^f$ to the system $\theta_{z_i}.g = g\cdot \sigma_\textbf{v}(\clM_{\theta_{z_i}}^f)$.
\end{lemma}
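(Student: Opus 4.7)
The plan is to derive the gauge transformation identity directly from Definition \ref{def:basis_change_matrix} by differentiating the relation $B \cdot \clM_v^f = \sigma_v(B)$ with the Euler operator $\theta_{z_i}$, and then verify that this identity implies the required transformation between the two systems.

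First I would observe the key commutation: since $\sigma_v$ shifts only the $\mathbf{x}$-variables while $\theta_{z_i}$ differentiates only with respect to the $\mathbf{z}$-variables, the operators $\theta_{z_i}$ and $\sigma_v$ (acting entrywise on matrices of rational functions in $K(\mathbf{x},\mathbf{z})$) commute. In particular, applying $\theta_{z_i}$ to the tuple $\sigma_v(B)$ gives
\[
\theta_{z_i}.\sigma_v(B) \;=\; \sigma_v(\theta_{z_i}.B) \;=\; \sigma_v\bigl(B\cdot \clM_{\theta_{z_i}}^f\bigr) \;=\; \sigma_v(B)\cdot \sigma_v(\clM_{\theta_{z_i}}^f).
\]
On the other hand, applying $\theta_{z_i}$ to $B\cdot \clM_v^f$ and using that $\theta_{z_i}$ is a derivation yields
\[
\theta_{z_i}.(B\cdot \clM_v^f) \;=\; (\theta_{z_i}.B)\cdot \clM_v^f \,+\, B\cdot \theta_{z_i}(\clM_v^f) \;=\; B\cdot \bigl(\clM_{\theta_{z_i}}^f\cdot \clM_v^f + \theta_{z_i}(\clM_v^f)\bigr).
\]
Equating the two expressions and using that $B$ is a basis of $\A'.f$ over $K(\mathbf{x},\mathbf{z})$ (so that the coordinates of any element in $B$ are unique) gives the central identity
\begin{equation}\label{eq:gauge_identity_plan}
\theta_{z_i}(\clM_v^f) \;=\; \clM_v^f\cdot \sigma_v(\clM_{\theta_{z_i}}^f) \,-\, \clM_{\theta_{z_i}}^f\cdot \clM_v^f.
\end{equation}

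Next I would translate this into the statement about gauge transformations. Let $g$ be any (row-)solution of $\theta_{z_i}.g = g\cdot \clM_{\theta_{z_i}}^f$, and set $\tilde g \coloneqq g\cdot \clM_v^f$. Applying $\theta_{z_i}$ and using the Leibniz rule together with \eqref{eq:gauge_identity_plan} gives
\[
\theta_{z_i}.\tilde g \;=\; (\theta_{z_i}.g)\cdot \clM_v^f + g\cdot \theta_{z_i}(\clM_v^f) \;=\; g\cdot\bigl(\clM_{\theta_{z_i}}^f\cdot \clM_v^f + \theta_{z_i}(\clM_v^f)\bigr) \;=\; g\cdot \clM_v^f\cdot \sigma_v(\clM_{\theta_{z_i}}^f) \;=\; \tilde g\cdot \sigma_v(\clM_{\theta_{z_i}}^f),
\]
which is exactly the second system in the lemma. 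Since $\clM_v^f \in \text{GL}_r(K(\mathbf{x},\mathbf{z}))$, the map $g\mapsto g\cdot \clM_v^f$ is invertible, so the two systems are genuinely related by $\clM_v^f$ as a gauge matrix.

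I do not anticipate any substantive obstacle: the only subtlety is the commutation of $\theta_{z_i}$ and $\sigma_v$, which is immediate from the multivariate Ore algebra structure of Definition \ref{def:multivariate_ore_algebra} (the $\sigma_i$'s and $\delta_j$'s are required to commute). Everything else is a direct application of the Leibniz rule and the uniqueness of coordinates in the basis $B$. If anything, care must be taken with conventions (row- versus column-vector actions, and whether the matrix acts on the left or right), but the choice fixed by \eqref{eq:Dfin Basis Change} makes the calculation above unambiguous.
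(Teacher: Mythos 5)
Your proposal is correct and follows essentially the same route as the paper: both establish the identity $\clM_v^f\,\sigma_v(\clM^f_{\theta_{z_i}}) = \theta_{z_i}.\clM^f_v+\clM^f_{\theta_{z_i}}\clM^f_v$ by exploiting that $B$ is a basis, the Leibniz rule, and the commutation of $\sigma_v$ with $\theta_{z_i}$ — you derive it by applying $\theta_{z_i}$ to $B\cdot\clM_v^f=\sigma_v(B)$, while the paper checks that both sides of the identity act identically on $B$, which is the same computation arranged differently. Your extra final paragraph verifying that $g\mapsto g\cdot\clM_v^f$ carries solutions of one system to the other is a harmless elaboration of what the paper leaves implicit in the phrase ``gauge transformation matrix.''
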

\begin{proof}
We must show that $\clM_\textbf{v}^f\sigma_\textbf{v}(\clM^f_{\theta_{z_i}}) = \theta_{z_i}.\clM^f_\textbf{v}+\clM^f_{\theta_{z_i}}\clM^f_\textbf{v}$. That can be verified by checking how both sides act on $B$:
$$(b_1.f,\dots,b_r.f)\cdot\clM_\textbf{v}^f\sigma_\textbf{v}(\clM^f_{\theta_{z_i}}) = \sigma_\textbf{v}((b_1.f,\dots,b_r.f)\cdot\clM^f_{\theta_{z_i}} )= (S_\textbf{v}\theta_{z_i}b_1.f,\dots,S_\textbf{v}\theta_{z_i}b_r.f)$$
$$(b_1.f,\dots,b_r.f)\cdot (\theta_{z_i}.\clM^f_\textbf{v}+\clM^f_{\theta_{z_i}}\clM^f_\textbf{v}) = (b_1.f,\dots,b_r.f)\cdot \theta_{z_i}.\clM^f_\textbf{v}+(\theta_{z_i}b_1.f,\dots,\theta_{z_i}b_r.f)\cdot\clM^f_\textbf{v}=$$
$$=\theta_{z_i}.\left((b_1.f,\dots,b_r.f)\cdot \clM^f_\textbf{v}\right) = (S_\textbf{v}\theta_{z_i}b_1.f,\dots,S_\textbf{v}\theta_{z_i}b_r.f)$$
This gives the desired equality.
\end{proof}
\begin{examp}\label{ex:tricomi}
Consider Tricomi's function $U(x_1,x_2,z)$, which is often used in physics \cite{Mathews2022}. It is D-finite, and in $\A = \Q(x_1,x_2,z)[S_{x_1},S_{x_2}]$, its annihilator is:
$$\text{ann} (U) = \langle z S_{x_2}+\left(x_1^2-x_2 x_1+x_1\right) S_{x_1}+\left(-x_1-z\right),z S_{x_2}^2+S_{x_2} \left(-x_2-z\right)+\left(x_2-x_1\right)\rangle$$

Thus, if we select $B = (U,S_{x_2}.U)$, we can calculate the generators of $\clM^U$:
$$S_{x_2}.B = (S_{x_2}.U,S_{x_2}^2.U)= (U,S_{x_2}.U)\begin{pmatrix}
    0&\frac{x_1-x_2}{z}\\
    1&\frac{x_2+z}{z}
\end{pmatrix}  = B\cdot \clM^U_{(0,1)}$$
$$S_{x_1}.B = (S_{x_1}.U,S_{x_1}S_{x_2}.U)= (U,S_{x_2}.U)\begin{pmatrix}
    \frac{x_1+z}{x_1^2-x_2 x_1+x_1}&-\frac{1}{x_1}\\
    \frac{-z}{x_1^2-x_2 x_1+x_1}&\frac{1}{x_1}
\end{pmatrix}  = B\cdot \clM^U_{(1,0)}$$
Hence, $\clM^U$ is a conservative matrix field of dimension $2$ and rank $2$ over $\Q(z)$.\\
Furthermore, $U$ satisfies:
$$\theta_z.U(x_1,x_2,z) = z(U(x_1,x_2,z)-U(x_1,x_2+1,z))$$
Implying that for the extended algebra $\A = \Q(x_1,x_2,z)[S_{x_1},S_{x_2},\theta_z]$ we have:
$$\theta_z\equiv z-zS_{x_2} \mod \text{ann}(U)\quad\text{or}\quad\theta_z.U = (z-zS_{x_2}).U$$
Hence, from linearity, the matrix $\clM^U_{\theta_z}$ is:
$$\clM^U_{\theta_z} = zI-z\clM^U_{(0,1)} = \begin{pmatrix}
    z&x_2-x_1\\
    -z&-x_2
\end{pmatrix}$$
\end{examp}
\begin{examp}
Consider the binomial function $f(x_1,x_2)=\binom{x_1}{x_2}$. This is a D-finite function, with annihilator
$$\text{ann}(f) = \langle(1+x_1-x_2)S_{x_1}-(1+x_1),(1+x_2)S_{x_2}-(x_1-x_2)\rangle$$
Since $\dim \A'.f = 1$, taking $B = (f)$ gives $\clM^f_{(1,0)} = 1+\frac{x_2}{1+x_1-x_2}$ and $\clM^f_{(0,1)} = \frac{x_1-x_2}{1+x_2}$. Thus $\clM^f$ is a conservative matrix field of dimension $2$ and rank $1$ over $\Q$ (that is, a multivariate hypergeometric term; see Remark~\ref{rem:MultiHyperTerm}).
\end{examp}
A particularly useful CMF of this kind is generated by the generalized hypergeometric function $_pF_q$.\\
In the case $z\notin\set{0,1}$, the annihilator of $\pFq{p}{q}{x_1,\dots,x_p}{x_{p+1},\dots,x_{p+q}}{z}$ is generated by its ODE, and its contiguous relations:
\begin{equation}
    \label{eq:cont_rel}
    \text{ann}\left(\pFq{p}{q}{x_1,\dots,x_p}{x_{p+1},\dots,x_{p+q}}{z}\right) = \left\langle\sum_{k=0}^{r}t_k(\textbf{x},z)\theta_z^k,\frac{\theta_z}{x_i}+1-S_{e_i},\frac{\theta_z}{x_{p+j}-1}+1-S_{-e_{p+j}}\right\rangle
\end{equation}
with $r = \max(p,q+1)$ the order of the differential equation, and $t_k\in\Q(\textbf{x},z)$ with $t_r = 1$. If we choose the basis $B = ({}_pF_q,\theta_z.{}_pF_q,\theta_z^2.{}_pF_q,\dots,\theta_z^{r-1}.{}_pF_q)$, then $\clM^{{}_pF_q}_{\theta_z}$ is in companion form:
$$\clM^{{}_pF_q}_{\theta_z} = \begin{pmatrix}
0 & 0 & \cdots & 0 & -t_0(\mathbf{x}, z) \\
1 & 0 & \cdots & 0 & -t_1(\mathbf{x}, z) \\
0 & 1 & \cdots & 0 & -t_2(\mathbf{x}, z) \\
\vdots & \vdots & \ddots & \vdots & \vdots \\
0 & 0 & \cdots & 1 & -t_{r-1}(\mathbf{x}, z)
\end{pmatrix}$$
and directly from the contiguous relations in \eqref{eq:cont_rel}, one gets
\begin{equation}
    \label{eq:pfq_cmf}
    \forall_{1\<=i\<=p}\clM^{{}_pF_q}_{e_i} =\frac{1}{x_i}\clM^{{}_pF_q}_{\theta_z}+I\ , \ \forall_{p+1\<=j\<=p+q}\clM^{{}_pF_q}_{-e_j}=\frac{1}{x_j-1}\clM^{{}_pF_q}_{\theta_z}+I.
\end{equation}
\begin{examp}\label{ex:2f1_generation}
Let us construct the CMF for $\pFq{2}{1}{x_1,x_2}{x_3}{z}$. First, note $r = 2$. The differential equation is:
$$(1-z)\theta_z^2 .\pFq{2}{1}{x_1,x_2}{x_3}{z}= x_1x_2z\pFq{2}{1}{x_1,x_2}{x_3}{z}+((x_1+x_2)z+1-x_3)\theta_z.\pFq{2}{1}{x_1,x_2}{x_3}{z}$$
Meaning, setting $B = ({}_2F_1,\theta_z.{}_2F_1)$, we get:
$$\clM^{{}_2F_1}_{\theta_z} = \begin{pmatrix}
 0 & \frac{x_1 x_2 z}{1-z} \\
 1 & \frac{x_1 z+x_2 z-x_3+1}{1-z} \\
\end{pmatrix}$$
Giving the generators of $\clM^{{}_2F_1}$ using \eqref{eq:pfq_cmf}:
$$\clM^{{}_2F_1}_{e_1} = 
\begin{pmatrix}
 1 & \frac{x_2 z}{1-z} \\
 \frac{1}{x_1} & \frac{x_1+x_2 z-x_3+1}{x_1(1-z)} \\
\end{pmatrix}\quad \clM^{{}_2F_1}_{e_2} = \begin{pmatrix}
 1 & \frac{x_1 z}{1-z} \\
 \frac{1}{x_2} & \frac{x_1 z+x_2-x_3+1}{x_2(1-z)} \\
\end{pmatrix} $$$$\clM^{{}_2F_1}_{e_3} = \sigma_3(\clM^{{}_2F_1}_{-e_3})^{-1} = \sigma_3\left(\frac{1}{x_3-1}\clM^{{}_2F_1}_{\theta_z}+I \right)^{-1}= \frac{x_3}{(x_1-x_3)(x_2-x_3)}\begin{pmatrix}
 -x_1-x_2+x_3 & x_1 x_2 \\
 \frac{1}{z}-1 & \frac{x_3 (z-1)}{z} \\
\end{pmatrix}. $$
Note that this is the CMF from Example~\ref{ex:2f1CMF_first_example}.
\end{examp}
Finally, let us discuss the matrix coefficients of CMFs.
\begin{lemma}\label{lem:MatrixCoeffsDFinite}
    Let $\clM$ be a CMF of dimension $d$ and rank $r$ over $\Q$, and let $\textbf{p},\textbf{p}'\in \Q^r$. Consider
\[
f(\textbf{v}):\Z^d\to \Q(\textbf{x})\qquad f(\textbf{v})\coloneqq \textbf{p}^{\mathsf T}\clM_\textbf{v}(\textbf{x})\textbf{p}'
\]
Then $f(\textbf{v})$ is a D-finite function in $\A = \Q(\textbf{v},\textbf{x})[S_{v_1},\dots,S_{v_d}]$. In fact, $\dim_{\Q(\textbf{v},\textbf{x})}\A.f\le r$.
\end{lemma}
\begin{proof}
From the closure properties of D-finite functions, only the basic cases $\textbf{p} = e_i,\textbf{p}' = e_j$ need to be resolved. Define $f(\textbf{v}) =e_i^{\mathsf T}\clM_{\textbf{v}}(\textbf{x})e_j $, and consider the row vector $e_i^{\mathsf T}\clM_{\textbf{v}}(\textbf{x})$. Note that from the co-cycle equation \eqref{eq:cocycle}:
\[
e_i^{\mathsf T}\clM_{\textbf{v}+\textbf{w}}(\textbf{x}) = e_i^{\mathsf T}\clM_{\textbf{v}}(\textbf{x})\clM_{\textbf{w}}(\textbf{x}+\textbf{v}).
\]
Since, for any $\textbf{w}\in \Z^d$, the entries of $\clM_{\textbf{w}}(\textbf{x}+\textbf{v})$ are rational functions of $\textbf{v}$, it follows that $S_\textbf{w}f(\textbf{v})$ is a $\Q(\textbf{v},\textbf{x})$-linear combination of the entries of the row vector $e_i^{\mathsf T}\clM_{\textbf{v}}(\textbf{x})$. Hence
\[
\dim_{\Q(\textbf{v},\textbf{x})}\A.f\le r.
\]
\end{proof}
Note that for CMFs generated by a D-finite function $\clM^f$, the matrix coefficients belong to $\A.g$ for a suitable solution $g$ of the difference equations satisfied by $f$.
\begin{examp}\label{ex:BeukersZeta2}
    Consider the Beukers integral \cite{Beukers1979} generating linear forms in $\zeta(2)$:
    \[
    I(\textbf{v})
=
\int_0^1 \int_0^1
\frac{x^{v_1}(1-x)^{v_2} y^{v_4}(1-y)^{v_3}}{(1-xy)^{\,v_5-v_2-v_3+1}}
\,dx\,dy
    \]
It is known that on a suitable positivity cone, this integral is D-finite with respect to shifts in its parameters. Hence one may construct $\clM^I$; using the basis $B = (I,S_{1,1,1,1,1} I)$, we obtain
\begin{equation}
\label{eq:Integral from CMF}
    I(\textbf{v}) = (I(\textbf{0}),I(\textbf{1}))\cdot \clM_\textbf{v}^I(0)\cdot e_1
 = (\zeta(2),5-3\zeta(2))\cdot \clM_\textbf{v}^I(0)\cdot e_1
\end{equation}
To get the coefficient of $1$ or of $\zeta(2)$ in these linear forms, simply use the appropriate row vector
\[
I(\textbf{v}) = (0,5)\cdot \clM_\textbf{v}^I(0)\cdot e_1
 +(1,-3)\cdot \clM_\textbf{v}^I(0)\cdot e_1 \zeta(2)\]
\end{examp}
The identity in \eqref{eq:Integral from CMF} can be generalized to give an efficient way to compute D-finite functions. Moreover, the CMF structure itself allows one to compute asymptotic expansions of matrix coefficients, and consequently of D-finite functions. This is developed in the following section.
\clearpage

\section{Asymptotics of CMFs}\label{sec:CMF_Asymp}
In this section, we study a specific subclass of conservative matrix fields and derive methods for computing the asymptotic growth of their matrix coefficients. Namely, in Corollary \ref{cor:cmfAsymptoticFactorization} we prove that every conservative matrix field $\clM$ in a certain class admits, along suitable directions $\textbf{v}\in\Z^d$, an asymptotic factorization of the form:
\[
     \clM_{n\textbf{v}}(\textbf{x}) = B(\textbf{x},\textbf{v})\diag(\lambda_i(\textbf{v})^n n^{\gamma_i(\textbf{x},\textbf{v})})\,(I+o(1))\,A(\textbf{v}) \qquad (n\to \infty)
,\]
for some explicit $B,\lambda_i,\gamma_i,A$. In Proposition \ref{prop:continuousEigenvalues} we demonstrate when $\frac{\log\abs{\lambda_1(\textbf{v})}}{\abs{\textbf{v}}}$ are continuous functions of $\frac{\textbf{v}}{\abs{\textbf{v}}}$. This is culminated in \eqref{eq:MatrixCoeffAsymp}, to get an asymptotic expansion of the matrix coefficients of the CMF. 

Much work has been done in the study of the asymptotics of solutions to difference and differential systems: \cite{Poincare1885,Perron1921,WimpZeilberger1985,BenzaidLutz1987}, including in the multivariate case \cite{Aomoto1974}. We introduce here a similar form of asymptotic analysis for a specific class of CMFs:
\begin{definition}
Let $\clM$ be a CMF of dimension $d$ and rank $r$ over $\Q$. We call $\clM$ \textit{balanced} if the entries of each generator $M_i$ of $\clM$, as well as those of its inverse $M_i^{-1}$, have nonpositive degree. Equivalently, the rational functions $h_{\pm i,j,k}(\textbf{x}) \coloneqq e_j^{\mathsf T}\clM_{\pm e_i}e_k\in \Q(\textbf{x})$ have total degree in the numerator less than or equal to that in the denominator.
\end{definition}
\begin{examp}
    The CMFs in Examples~\ref{ex:Zeta3CMF} and \ref{ex:Const3x3} are balanced. However, the CMF in Example~\ref{ex:2f1CMF_first_example} is not. Note that this property is not preserved under coboundary equivalence, as seen in Example~\ref{ex:Zeta3CoboundUnbalanced}.
\end{examp}
One useful consequence of a CMF being balanced is that, for all directions $\textbf{v}$ in some open subset of $\P^{d-1}$, the matrices $\clM_{e_i}(\textbf{x}+n\textbf{v})$ converge entrywise to matrices $N_i(\textbf{v})\in\GL_r(\R)$ as $n\to\infty$, with error term $O(1/n)$. When $\textbf{v}\in\Z^d$, the same is therefore true for the trajectory matrix $T_{\textbf{x},\textbf{v}}(n)$. This lets us apply the Benzaid--Lutz framework, i.e. a discrete Levinson-type method, to obtain strong asymptotic control of $\clM_{n\textbf{v}}(\textbf{x})$.
\begin{prop}\label{prop:generatorAsymp}
    Let $\clM$ be a balanced CMF of dimension $d$ and rank $r$ over $\Q$. Then, for all directions $\textbf{v}$ in some open subset of $\P^{d-1}$, the matrices $\clM_{e_i}(\textbf{x}+n\textbf{v})$ satisfy entrywise
    \[
    \clM_{e_i}(\textbf{x}+n\textbf{v}) = N_i(\textbf{v})+\frac{C}{n}+O(n^{-2})\qquad (n\to\infty)
    \]
    for some  $N_i(\textbf{v})\in\GL_r(\R)$. Moreover,
    \[
     N_i(\textbf{v}) N_j(\textbf{v}) = N_j(\textbf{v}) N_i(\textbf{v})\qquad \forall\, 1\le i,j\le d
    \]
\end{prop}
\begin{proof}
Looking at the entries of $\clM_{e_i}(\textbf{x}+n\textbf{v})$ and $\clM_{-e_i}(\textbf{x}+n\textbf{v})$, we see that as long as $\textbf{v}$ is not a root or singularity of the leading homogeneous terms of these rational functions, the entries become rational functions of $n$ of balanced degree. Hence they admit the stated asymptotic expansion. The set of such $\textbf{v}$ is open in $\P^{d-1}$.

The limit matrices $N_i(\textbf{v})$ are invertible because of the relation between $\clM_{e_i}^{-1}$ and $\clM_{-e_i}$. Finally, \eqref{eq:cocycle} gives
\[
\clM_{e_i}(\textbf{x}+n\textbf{v})\clM_{e_j}(\textbf{x}+n\textbf{v}+e_i) = \clM_{e_j}(\textbf{x}+n\textbf{v})\clM_{e_i}(\textbf{x}+n\textbf{v}+e_j),
\]
and taking the limit as $n\to\infty$ yields
 \[
     N_i(\textbf{v}) N_j(\textbf{v}) = N_j(\textbf{v}) N_i(\textbf{v})\qquad \forall\, 1\le i,j\le d.
    \]
\end{proof}
\begin{corollary}
\label{cor:TrajAsymp}
For $\clM$ and $\textbf{v} = (v_1,\dots,v_d)\in\Z^d$ as in Proposition~\ref{prop:generatorAsymp}, the trajectory matrix $T_{\textbf{x},\textbf{v}}(n)$ has the following entrywise asymptotic expansion:
\[
T_{\textbf{x},\textbf{v}} (n) = \prod_{i=1}^d N_i(\textbf{v})^{v_i}+\frac{C}{n}+O(n^{-2}) \qquad (n\to\infty)
\]
\end{corollary}
\begin{proof}
    This is immediate from the definition of the trajectory matrix and Proposition~\ref{prop:generatorAsymp}.
\end{proof}
Now that we have an asymptotic expansion of trajectory matrices, we can use a Levinson/Benzaid--Lutz type factorization to obtain an asymptotic factorization of $\clM_{n\textbf{v}}(\textbf{x})$, at least under suitable hypotheses on the eigenvalues.
\begin{lemma}\label{lem:LutzAsympFactorization}
    Suppose that a sequence of matrices $T(n)\in \operatorname{GL}_r(\Q(n))$ satisfies
    \[
    T(n) = T+\frac{C}{n}+O(n^{-2}) \qquad (n\to\infty)
    \]
    and that the eigenvalues of $T$ satisfy
    \[    \abs{\lambda_1}>\dots>\abs{\lambda_r}>0
    \]
    Then the product $P(n)\coloneqq T(1)T(2)\cdots T(n)$ factors as
    \[
    B\operatorname{diag}(\lambda_1^n n^{\gamma_1},\dots,\lambda_r^n n^{\gamma_r})(I+o(1))A
    \]
for some $B,A\in\GL_r(\R),\gamma_i\in \R$.

Moreover, if $S^{-1}TS=\Lambda\coloneqq\diag(\lambda_1,\dots,\lambda_r)$ and $\widetilde C\coloneqq S^{-1}CS$,
then one may take
\[
\gamma_i=\frac{d_i}{\lambda_i},\qquad d_i\coloneqq(\widetilde C)_{ii},\qquad A = S^{-1}.
\]
\end{lemma}

\begin{proof}
In this proof, we find a coboundary transformation $H(n)$ that diagonalizes $T(n)$ up to order $O(n^{-2})$, then calculate the cummulative product $\Phi(n)$ of the diagonal terms $A(n)$,  and use a result by Benzaid and Lutz \cite{BenzaidLutz1987} to connect $\Phi(n)$ and $P(n)$. The asymptotic factorization is then done on $\Phi(n)$, and propagated to the form above. 

Since $T$ has real entries, the strict inequalities $|\lambda_1|>\cdots>|\lambda_r|$ 
force $\lambda_i\in\R$. In particular, $T$ has simple real spectrum and is diagonalizable over $\R$.
Fix $S\in\GL_r(\R)$ such that $S^{-1}TS=\Lambda=\diag(\lambda_1,\dots,\lambda_r)$.

First - we conjugate the system:

Set $\widetilde T(n)\coloneqq S^{-1}T(n)S$. Conjugating the expansion gives
\[
\widetilde T(n)=\Lambda+\frac{\widetilde C}{n}+O(n^{-2}),\qquad \widetilde C\coloneqq S^{-1}CS.
\]
Write $\widetilde C=D+N$ with $D\coloneqq\diag(\widetilde C)$ diagonal and $N\coloneqq\widetilde C-D$ off-diagonal. Define a constant matrix $U$ by $U_{ii}\coloneqq0$ and, for $i\neq j$,
\[
U_{ij}\coloneqq-\frac{N_{ij}}{\lambda_i-\lambda_j}.
\]
Then $\Lambda U-U\Lambda=-N$. Define
\[
H(n)\coloneqq S\Bigl(I+\frac{U}{n}\Bigr)=S+\frac{SU}{n}+O(n^{-2}).
\]
A direct expansion yields
\begin{equation}\label{eq:hatT}
\widehat T(n)\coloneqq H(n)^{-1}T(n)H(n+1)=\Lambda+\frac{D}{n}+R(n).
\end{equation}
Where
\[
R(n)=O(n^{-2})\qquad (n\to\infty).
\]
From $T(n)=H(n)\widehat T(n)H(n+1)^{-1}$ we obtain
\begin{equation}\label{eq:telescope}
P(n)=H(1)\,\widehat P(n)\,H(n+1)^{-1},
\qquad
\widehat P(n)\coloneqq\widehat T(1)\widehat T(2)\cdots \widehat T(n).
\end{equation}
This can also be thought of, in our context, as a coboundary transformation (see definition \ref{def:coboundary}).

Let
\[
A(n)\coloneqq\Lambda+\frac{D}{n}\quad(\text{diagonal}),\qquad
\Phi(n)\coloneqq A(1)A(2)\cdots A(n)\quad(\text{diagonal}).
\]
Since $R(n)=O(n^{-2})$ is summable, we apply a discrete diagonal-Levinson theorem
in the \emph{column convention} to the transposed system
\[
x(n+1)=\widehat T(n+1)^{\mathsf T}x(n).
\]
Writing $Q(n)\coloneqq\widehat P(n)^{\mathsf T}$ for its fundamental matrix,
Lemma 2.1 in \cite{BenzaidLutz1987} yields a fundamental matrix $W(n)$ of the form
\[
W(n)=(I+o(1))\,\Phi(n),
\]
hence $Q(n)=W(n)\,C_\infty$ for some constant $C_\infty\in\GL_r(\R)$.
Transposing back, we obtain
\begin{equation}\label{eq:BLform}
\widehat P(n)=C_\infty^{\mathsf T}\,\Phi(n)\,(I+o(1)).
\end{equation}

Now, let us factorize $\Phi(n)$ asymptotically. Write $D=\diag(d_1,\dots,d_r)$. Since $\Phi(n)$ is diagonal,
\[
\Phi(n)=\diag\Bigl(\prod_{k=1}^n\bigl(\lambda_i+d_i/k\bigr)\Bigr)_{i=1}^r.
\]
For each $i$,
\[
\prod_{k=1}^n\Bigl(\lambda_i+\frac{d_i}{k}\Bigr)
=\lambda_i^n\prod_{k=1}^n\Bigl(1+\frac{d_i}{\lambda_i k}\Bigr)
=c_i\,\lambda_i^n\,n^{d_i/\lambda_i}\,(1+o(1))
\]
for some $c_i\neq 0$. Hence, with $\gamma_i\coloneqq d_i/\lambda_i$ and $C_0\coloneqq\diag(c_1,\dots,c_r)\in\GL_r(\R)$,
\begin{equation}\label{eq:Phi-asymp}
\Phi(n)=C_0\,\diag(\lambda_1^n n^{\gamma_1},\dots,\lambda_r^n n^{\gamma_r})\,(I+o(1)),
\end{equation}
where the right-hand $(I+o(1))$ is diagonal.
Combining \eqref{eq:telescope}, \eqref{eq:BLform}, and \eqref{eq:Phi-asymp} gives
\[
P(n)=H(1)\,C_\infty^{\mathsf T}\,C_0\,\diag(\lambda_i^n n^{\gamma_i})\,(I+o(1))\,H(n+1)^{-1}.
\]
Since $H(n+1)^{-1}=S^{-1}(I+O(1/n))$ tends to $S^{-1}$ and is bounded, we may absorb it
into the same right-hand $(I+o(1))$ factor, obtaining
\[
P(n)=B\,\diag(\lambda_i^n n^{\gamma_i})\,(I+o(1))\,A
\]
with $B\coloneqq H(1)\,C_\infty^{\mathsf T}\,C_0\in\GL_r(\R)$ and $A\coloneqq S^{-1}\in\GL_r(\R)$.
\end{proof}
Let us now apply this result to balanced CMFs. In our setting, we are naturally interested in the specialization $T(n)=T_{\textbf{x},\textbf{v}}(n)$, a trajectory matrix, so that by \eqref{eq:prod_of_traj} we have $P(n)=\clM_{n\textbf{v}}(\textbf{x})$.
\begin{corollary}
\label{cor:cmfAsymptoticFactorization}
    For $\clM$ and $\textbf{v}$ as in Proposition~\ref{prop:generatorAsymp}, assume that the eigenvalues $\lambda_i(\textbf{v})$ of $T\coloneqq \lim_{n\to \infty} T_{\textbf{x},\textbf{v}}(n)$ satisfy $\abs{\lambda_1(\textbf{v})}>\dots>\abs{\lambda_r(\textbf{v})}>0$. Then one obtains an asymptotic factorization of $\clM_{n\textbf{v}}(\textbf{x})$:
    \[
     \clM_{n\textbf{v}}(\textbf{x}) = B(\textbf{x},\textbf{v})\diag(\lambda_i(\textbf{v})^n n^{\gamma_i(\textbf{x},\textbf{v})})\,(I+o(1))\,A(\textbf{v})
    \]
\end{corollary}
Corollary~\ref{cor:cmfAsymptoticFactorization} allows one to compute the asymptotics of the matrix coefficients of the CMF. For $f(\textbf{v}) = \textbf{p}^{\mathsf T}\clM_{\textbf{v}}(\textbf{x})\textbf{p}'$, we get
\[
f(n\textbf{v}) = \textbf{p}^{\mathsf T}B(\textbf{x},\textbf{v})\diag(\lambda_i(\textbf{v})^n n^{\gamma_i(\textbf{x},\textbf{v})})\,(I+R(n))\,A(\textbf{v})\textbf{p}'
\qquad R(n)_{i,j} = o(1)\]
Denote $(\alpha_1,\dots,\alpha_r)\coloneqq A(\textbf{v})\textbf{p}'$, $(\beta_1,\dots,\beta_r)\coloneqq \textbf{p}^{\mathsf T}B(\textbf{x},\textbf{v})$, and $k\coloneqq \min\set{i:\beta_i\neq 0}$. Assuming $\forall_i \alpha_i \neq 0$, it is simple to show that 
\begin{equation}\label{eq:MatrixCoeffAsymp}
    f(n\textbf{v})= \alpha_k\beta_k\lambda_k(\textbf{v})^n n^{\gamma_k(\textbf{x},\textbf{v})}(1+E(n))+O(\lambda_{j}(\textbf{v})^n n^{\gamma_{j}(\textbf{x},\textbf{v})})
\end{equation}
where $E(n)=o(1)$ comes from $R(n)$, and $j = \min\set{i:\beta_i\neq 0,\ i>k}$ (if $k=r$, the final error term on the right is omitted).
The assumption $\alpha_i\neq 0$ is harmless in the context of this paper, since it concerns constant choices of $\textbf{p}'$, which can be selected to satisfy this condition.

Finally, one can learn more about the behavior of $\lambda_i(\textbf{v})$ as functions of $\textbf{v}$ by using Corollary ~\ref{cor:TrajAsymp}.
\begin{prop}
\label{prop:continuousEigenvalues}
    Let $\clM$ and $\textbf{v}$ be as in Corollary~\ref{cor:TrajAsymp}. Assume further that $N_i(\textbf{v}) \coloneqq \lim_{n\to\infty} \clM_{e_i}(\textbf{x}+n\textbf{v})$ is diagonalizable for all $i$. Then the normalized spectrum of $T = \lim_{n\to\infty} T_{\textbf{x},\textbf{v}}(n)$, defined by
    \[
    \left (\frac{\log\abs{\lambda_1(\textbf{v})}}{\abs{\textbf{v}}},\frac{\log\abs{\lambda_2(\textbf{v})}}{\abs{\textbf{v}}},\dots,\frac{\log\abs{\lambda_r(\textbf{v})}}{\abs{\textbf{v}}}\right )
    \]
    is a continuous function of $\frac{\textbf{v}}{\abs{\textbf{v}}}$.
\end{prop}
\begin{proof}
First, note that as $N_i(\textbf{v})$ is just the value of plugging $\textbf{v}$ in the leading monomials of the generator matrices of the CMF, their entries are continuous functions of $\frac{\textbf{v}}{\abs{\textbf{v}}}$.

As the matrices $N_i(\textbf{v})$ commute, there is a matrix $S(\textbf{v})$ that diagonalizes all of them simultaneously. Let us denote
\[
\Lambda_i \coloneqq S^{-1}(\textbf{v})N_i(\textbf{v}) S(\textbf{v})
\]
Thus, as $T =\prod_{i=1}^d N_i(\textbf{v})^{v_i} $, we have that:
    \[
    S^{-1}(\textbf{v})T S(\textbf{v}) = S^{-1}(\textbf{v})\prod_{i=1}^d N_i(\textbf{v})^{v_i}  S(\textbf{v}) = \prod_{i=1}^d \Lambda_i ^{v_i}
    \]

By direct calculation, each normalized eigenvalue of $T$ can be written as an inner product of $\frac{\textbf{v}}{\abs{\textbf{v}}}$ with the logarithms of the absolute values of the corresponding eigenvalues of $N_i(\textbf{v})$. This is continuous as a function of $\frac{\textbf{v}}{\abs{\textbf{v}}}$.
\end{proof}
The next section uses these tools—namely Corollary~\ref{cor:cmfAsymptoticFactorization}, Proposition~\ref{prop:continuousEigenvalues}, and \eqref{eq:MatrixCoeffAsymp}—to analyze the linear forms and approximations arising from CMFs.
\clearpage
\section{CMF Ratios and CMF Limits}\label{sec:CMF_Ratios}
This section defines and discusses CMF limits and CMF ratios, which generalize Ap\'ery limits and D-finite ratios. We establish their limits, and convergence rates (in the sense of Definition \ref{def:convergence_rate}) using the tools established in the previous section, namely the asymptotic expansion in \eqref{eq:MatrixCoeffAsymp}.
\begin{definition}
Let $\clM$ be a CMF of dimension $d$ and rank $r$ over $\Q$. Choose a trajectory $\textbf{x}+n\textbf{v}$ with $\textbf{v}\in\Z^d$ and $\textbf{x}\in\Q^d$, together with four vectors $\textbf{p},\textbf{p}',\textbf{q},\textbf{q}'\in \Q^r$.\\
The \textit{CMF ratio} is then the sequence
    \begin{equation} \label{eq:CMF_limit_def}
        \clL_{\textbf{x},\textbf{v}}^{\textbf{p},\textbf{p}',\textbf{q},\textbf{q}'}(n) \coloneqq \frac{\textbf{p}^{\mathsf T}\cdot \clM_{n\textbf{v}}(\textbf{x})\cdot \textbf{p}'}{\textbf{q}^{\mathsf T}\cdot \clM_{n\textbf{v}}(\textbf{x})\cdot \textbf{q}'}.
    \end{equation}
For brevity, write $\clL_{\textbf{x},\textbf{v}}^{\textbf{p},\textbf{q}}\coloneqq\clL_{\textbf{x},\textbf{v}}^{\textbf{p},e_r,\textbf{q},e_r}$. The limit of $\clL_{\textbf{x},\textbf{v}}^{\textbf{p},\textbf{p}',\textbf{q},\textbf{q}'}(n)$ as $n\to\infty$ is called a \textit{CMF limit}.
\end{definition}
Given Lemma~\ref{lem:MatrixCoeffsDFinite}, each specific CMF limit is also an Ap\'ery limit in the sense of Definition~\ref{def:AperyLimit}. However, the present terminology is useful when the direction $\textbf{v}$ varies. We begin with two concrete examples and then use the results of the previous section to describe their limits and convergence rates.
\begin{examp}
Consider the CMF generated by $\pFq{2}{1}{x_1,x_2}{x_3}{-1}$. This CMF is obtained from the CMF in Examples~\ref{ex:2f1CMF_first_example} and \ref{ex:2f1_generation} by substituting $z=-1$. Let us compute a few of its CMF ratios.\\
First, consider $\clL_{\textbf{x},\textbf{v}}^{\textbf{p},\textbf{q}}(n)$ for $\textbf{x}=\textbf{v}=(1,1,2)$, $\textbf{p}=(0,1)$, and $\textbf{q}=(-2,2)$. We begin by computing $\clM_{n\textbf{v}}(\textbf{x})$ for a few values of $n$:
\[\clM_0(\textbf{x}) = I, \clM_\textbf{v}(\textbf{x}) = \begin{pmatrix}
    -6&18\\-24&66
\end{pmatrix},\clM_{2\textbf{v}}(\textbf{x}) = \begin{pmatrix}
    -150&660\\-540&2370
\end{pmatrix}, \clM_{3\textbf{v}}(\textbf{x}) =\begin{pmatrix}
    - \frac{10220}{3} & \frac{62300}{3}\\- \frac{36680}{3} & \frac{223580}{3}
\end{pmatrix}. \]
Hence:
$$\clL_{\textbf{x},\textbf{v}}^{\textbf{p},\textbf{q}}(0) = \frac{1}{2},\clL_{\textbf{x},\textbf{v}}^{\textbf{p},\textbf{q}}(1) = \frac{11}{16},\clL_{\textbf{x},\textbf{v}}^{\textbf{p},\textbf{q}}(2) = \frac{79}{114},\clL_{\textbf{x},\textbf{v}}^{\textbf{p},\textbf{q}}(3) = \frac{1597}{2304}.$$
Or in decimal form:
$$\clL_{\textbf{x},\textbf{v}}^{\textbf{p},\textbf{q}}(n) = (0.5,0.6875,0.6930\dots,0.69314\dots,\dots)$$
The sequence appears to converge to $\log(2)\approx 0.693147\dots$. Moreover, it has a nontrivial convergence rate $\rho = -3.53\dots$ and appears to have a positive irrationality measure $\delta \approx 0.31$.

In contrast, consider the CMF ratio $\clL_{\textbf{x},\textbf{v}}^{\textbf{p},\textbf{q}}(n)$ for $\textbf{x}=\textbf{v}=(-1,-1,2)$, $\textbf{p}=(0,1)$, and $\textbf{q}=(-2,2)$.
The first few values are:
$$\clL_{\textbf{x},\textbf{v}}^{\textbf{p},\textbf{q}}(n) = (\frac{1}{2},\frac{19}{60}, \frac{1109}{5460},\frac{713}{13860},- \frac{327713}{540540},\dots)$$
This sequence does not appear to converge. The first $300$ values are plotted in Figure~\ref{fig:2f1NonConverging}.
\begin{figure}[htbp]
    \centering
    \includegraphics[width=0.75\linewidth]{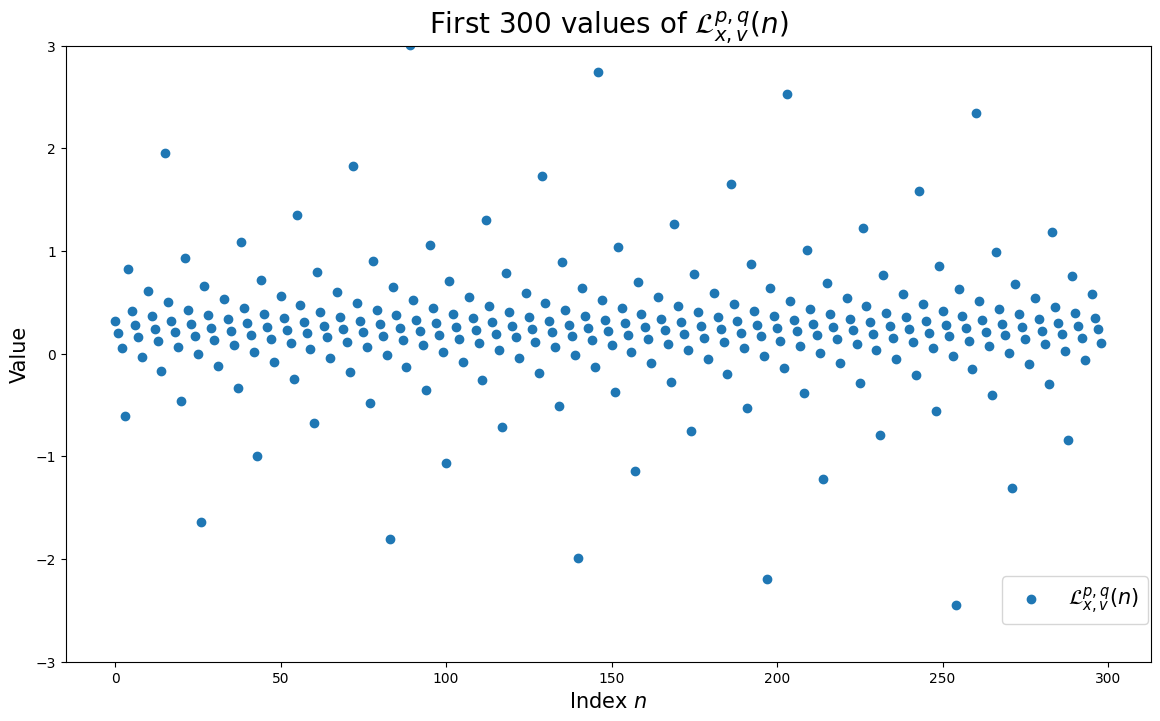}
    \caption{Example of a nonconvergent CMF ratio: the first $300$ values of $\clL_{\textbf{x},\textbf{v}}^{\textbf{p},\textbf{q}}(n)$.}
    \label{fig:2f1NonConverging}
\end{figure}
\end{examp}
From now on, let us consider CMFs $\clM$ and directions $\textbf{v}$ that satisfy the hypotheses of Proposition~\ref{prop:continuousEigenvalues}. In this setting, we may use \eqref{eq:MatrixCoeffAsymp}.
\begin{theorem}\label{thm:cmf_rat_convergence}
Let $\clM$ and $\textbf{v}$ be as in Proposition~\ref{prop:continuousEigenvalues}. Furthermore, assume that $e_r$ satisfies $\forall_i\ e_i^\mathsf{T}A(\textbf{v})e_r\neq 0$. Then the CMF limit satisfies
\[
\clL_{\textbf{x},\textbf{v}}^{\textbf{p},\textbf{q}}(n) = \frac{\textbf{p}^{\mathsf T}B(\textbf{x},\textbf{v})e_k}{\textbf{q}^{\mathsf T}B(\textbf{x},\textbf{v})e_k} + \Theta\left(\left(\frac{\lambda_j(\textbf{v})}{\lambda_k(\textbf{v})}\right)^n n^C\right)
\]
provided the following conditions hold:
\begin{enumerate}
    \item $\textbf{p}$ and $\textbf{q}$ are linearly independent (otherwise the error term may be omitted).
    \item $\min\set{i:\textbf{p}^{\mathsf T}B(\textbf{x},\textbf{v})e_i\neq 0} = k = \min\set{i:\textbf{q}^{\mathsf T}B(\textbf{x},\textbf{v})e_i\neq 0}$.
    \item $j = \min\set{i:(\textbf{p}-c\textbf{q})^{\mathsf T}B(\textbf{x},\textbf{v})e_i\neq 0}$, where $c = \frac{\textbf{p}^{\mathsf T}B(\textbf{x},\textbf{v})e_k}{\textbf{q}^{\mathsf T}B(\textbf{x},\textbf{v})e_k}$.
\end{enumerate}
\end{theorem}
\begin{proof}
Denote $c\coloneqq \frac{\textbf{p}^{\mathsf T}B(\textbf{x},\textbf{v})e_k}{\textbf{q}^{\mathsf T}B(\textbf{x},\textbf{v})e_k}$. A direct computation shows that multiplying the error $\clL_{\textbf{x},\textbf{v}}^{\textbf{p},\textbf{q}}(n)-c$ by the denominator of the CMF ratio produces a different matrix coefficient:
\[
\textbf{q}^{\mathsf{T}}\clM_{n \textbf{v}}(\textbf{x})e_r \left(\clL_{\textbf{x},\textbf{v}}^{\textbf{p},\textbf{q}}(n) -c \right) =(\textbf{p}-c\textbf{q})^{\mathsf T} \clM_{n \textbf{v}}(\textbf{x})e_r
\]
Condition 1 ensures that $\textbf{p}-c\textbf{q}\neq \textbf{0}$, and hence that $j$ from condition 3 exists. Condition 2 implies that $j>k$. Finally, using \eqref{eq:MatrixCoeffAsymp}, we obtain
\[
\textbf{q}^{\mathsf{T}}\clM_{n \textbf{v}}(\textbf{x})e_r =  \Theta(\lambda_k(\textbf{v})^n n^{C_1})  ,
\]
and 
\[
(\textbf{p}-c\textbf{q})^{\mathsf T} \clM_{n \textbf{v}}(\textbf{x})e_r = \Theta(\lambda_j(\textbf{v})^n n^{C_2})
\]
and therefore
\[
\clL_{\textbf{x},\textbf{v}}^{\textbf{p},\textbf{q}}(n) -c  =\frac{(\textbf{p}-c\textbf{q})^{\mathsf T} \clM_{n \textbf{v}}(\textbf{x})e_r}{\textbf{q}^{\mathsf{T}}\clM_{n \textbf{v}}(\textbf{x})e_r} = \Theta\left(\left(\frac{\lambda_j(\textbf{v})}{\lambda_k(\textbf{v})}\right)^n n^C\right)
\]
for some $C$.
\end{proof}
This is equivalent to the statement that, under the hypotheses of Theorem~\ref{thm:cmf_rat_convergence}, the CMF ratio $\clL_{\textbf{x},\textbf{v}}^{\textbf{p},\textbf{q}}(n)$ converges to the CMF limit
\[
c = \frac{\textbf{p}^{\mathsf T}B(\textbf{x},\textbf{v})e_k}{\textbf{q}^{\mathsf T}B(\textbf{x},\textbf{v})e_k}
\]
and has convergence rate $\rho = \log\abs{\lambda_j(\textbf{v})}-\log\abs{\lambda_k(\textbf{v})}$ in the sense of Definition~\ref{def:convergence_rate}.

The following section studies a number of experimental properties of specific CMFs and CMF limits, formulates them as conjectures, proves some special cases, and suggests several applications and corollaries.
\clearpage
\section{Experimental Analysis of Conservative Matrix Fields}\label{sec:Experiments}
% Edited note: section-opening paragraph tightened; the four arithmetic/asymptotic quantities are introduced more cleanly.
In Section~\ref{sec:Preliminaries}, we introduced four quantities attached to sequences of Diophantine approximations: the limit, the convergence rate ($\rho$), the height growth ($\eta$), and the irrationality measure ($\delta$); see Definitions~\ref{def:convergence_rate}, \ref{def:limit_height}, and \ref{def:irrationality_measure_of_seq}. In this section, we analyze how these quantities, for CMF ratios $\clL_{\textbf{x},\textbf{v}}^{\textbf{p},\textbf{q}}(n)$, depend on the direction $\textbf{v}$. We do so both empirically and theoretically.

% Edited note: proposition statement polished; notation and the continuity claim were made more explicit.
\begin{prop}\label{prop:homog_and_contin_metrics}
Consider a CMF $\clM$ and an open set $V$ of directions such that every $\textbf{v}\in V$ satisfies the hypotheses of Theorem~\ref{thm:cmf_rat_convergence}. For the family of CMF ratios $\clL_{\textbf{x},\textbf{v}}^{\textbf{p},\textbf{q}}(n)$, $\textbf{v}\in V$, denote, and assume the existence of, the limit $l(\textbf{v})$, the convergence rate $\rho(\textbf{v})$, the height growth $\eta(\textbf{v})$, and the irrationality measure $\delta(\textbf{v})$.

Then these quantities are homogeneous:
\[
l(k\textbf{v}) = l(\textbf{v})\quad \delta(k\textbf{v}) = \delta(\textbf{v})\quad \rho(k\textbf{v}) = k\rho(\textbf{v})\quad \eta(k\textbf{v}) = k\eta(\textbf{v})
\]
for all $k\in \N_{>0}$. Moreover, in the case $r = 2$ (with $r$ the rank of $\clM$), $\frac{\rho(\textbf{v})}{\abs{\textbf{v}}}$ is a continuous function of $\frac{\textbf{v}}{\abs{\textbf{v}}}$.
\end{prop}
% Edited note: proof streamlined; the r=2 case and the dependence on eigenvalue indices were clarified.
\begin{proof}
    The homogeneity is immediate from $\clL_{\textbf{x},k\textbf{v}}^{\textbf{p},\textbf{q}}(n)=\clL_{\textbf{x},\textbf{v}}^{\textbf{p},\textbf{q}}(kn)$ together with Remark~\ref{rem:convergence_height_delta_connection}. The continuity of the normalized convergence rate follows from Proposition~\ref{prop:continuousEigenvalues} and the fact that, when $r=2$, the convergence rate of $\clL_{\textbf{x},\textbf{v}}^{\textbf{p},\textbf{q}}(n)$ must equal $\rho(\textbf{v}) = \log\abs{\lambda_2(\textbf{v})}-\log\abs{\lambda_1(\textbf{v})}$ by the linear independence of $\textbf{p}$ and $\textbf{q}$. For $r>2$, the same conclusion holds precisely when the indices $1\le k<j\le r$ in $\rho(\textbf{v}) = \log\abs{\lambda_j(\textbf{v})}-\log\abs{\lambda_k(\textbf{v})}$ do not depend on $\textbf{v}$.
\end{proof}

% Edited note: numerical-estimation paragraph rewritten for readability and notation consistency.
We now choose specific CMFs and parameters $\textbf{x},\textbf{p},\textbf{q}$ and graph empirical evaluations of these quantities. We sample $\textbf{v}$ from sets of lattice points in the unit ball for which $\forall_{n\in\N}\ \clM_{n\textbf{v}}(\textbf{x})\in\text{GL}_r(\Q)$. While $\frac{\log\abs{\lambda_i(\textbf{v})}}{\abs{\textbf{v}}}$ can be computed in closed form, the other parameters must be estimated numerically. For $N\gg 1$, we use
\[
\hat{l} = \clL^{\textbf{p},\textbf{q}}_{\textbf{x},\textbf{v}}(N)\approx\lim_{n\to\infty}\clL^{\textbf{p},\textbf{q}}_{\textbf{x},\textbf{v}}(n)=l,
\qquad
\hat{\rho} = \frac{\log\abs{\clL^{\textbf{p},\textbf{q}}_{\textbf{x},\textbf{v}}(N)-\clL^{\textbf{p},\textbf{q}}_{\textbf{x},\textbf{v}}(2N)}}{N}\approx\rho,
\]
and
\[
\hat{\delta} = -1-\frac{\log\abs{\clL^{\textbf{p},\textbf{q}}_{\textbf{x},\textbf{v}}(N)-\clL^{\textbf{p},\textbf{q}}_{\textbf{x},\textbf{v}}(2N)}}{\log\abs{H(\clL^{\textbf{p},\textbf{q}}_{\textbf{x},\textbf{v}}(N))}}\approx\delta.
\]
% Edited note: zeta(3) experiment setup paragraph polished and notation standardized.
First, we examine the CMF from Example~\ref{ex:Zeta3CMF}, generated by
$$M_{1} =  \begin{pmatrix}0 & -1\\\frac{\left(x_{1} + 1\right)^{3}}{x_{1}^{3}} & \frac{x_{1}^{3} + 2 x_{2} \left(2 x_{1} + 1\right) \left(x_{2} - 1\right) + \left(x_{1} + 1\right)^{3}}{x_{1}^{3}}\end{pmatrix} \quad M_{2}= \begin{pmatrix}\frac{- x_{1}^{3} + 2 x_{1}^{2} x_{2} - 2 x_{1} x_{2}^{2} + x_{2}^{3}}{x_{2}^{3}} & - \frac{x_{1}^{3}}{x_{2}^{3}}\\\frac{x_{1}^{3}}{x_{2}^{3}} & \frac{x_{1}^{3} + 2 x_{1}^{2} x_{2} + 2 x_{1} x_{2}^{2} + x_{2}^{3}}{x_{2}^{3}}\end{pmatrix}$$
This CMF satisfies the hypotheses of Theorem~\ref{thm:cmf_rat_convergence} for all $\textbf{v}=(a,b)$ with $a,b>0$. We examine the ratio $\clL_{\textbf{x},\textbf{v}}^{\textbf{p},\textbf{q}}(n)$ for $\textbf{x}=(1,1)$, $\textbf{p}=(0,1)$, and $\textbf{q}=(1,1)$, with $\textbf{v}\in \set{(a,b)\in\N^2 : \gcd(a,b)=1,\ \abs{(a,b)}<14}$. For each of the $97$ trajectories, we computed $\hat{l}(\textbf{v})$, $\hat{\delta}(\textbf{v})$, $\frac{\hat{\rho}(\textbf{v})}{\abs{\textbf{v}}}$, and $\frac{\log\abs{\lambda_i(\textbf{v})}}{\abs{\textbf{v}}}$ with $N=1000$. These quantities are plotted as functions of the angle of $\textbf{v}$ with the $x_1$-axis in Figure~\ref{fig:zeta3panels}.\\
\begin{figure}[htbp]
    \centering

    % Row 1
    \begin{subfigure}[b]{0.49\textwidth}
        \centering
        \includegraphics[width=\textwidth]{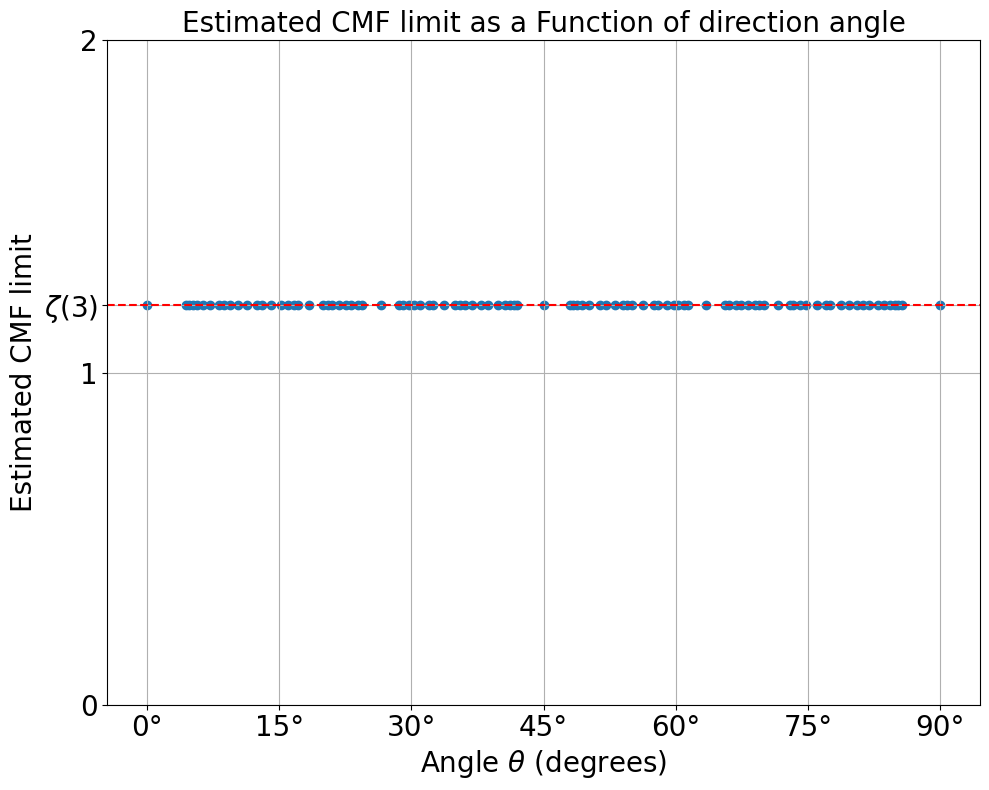}
    \end{subfigure}
    \hfill
    \begin{subfigure}[b]{0.49\textwidth}
        \centering
        \includegraphics[width=\textwidth]{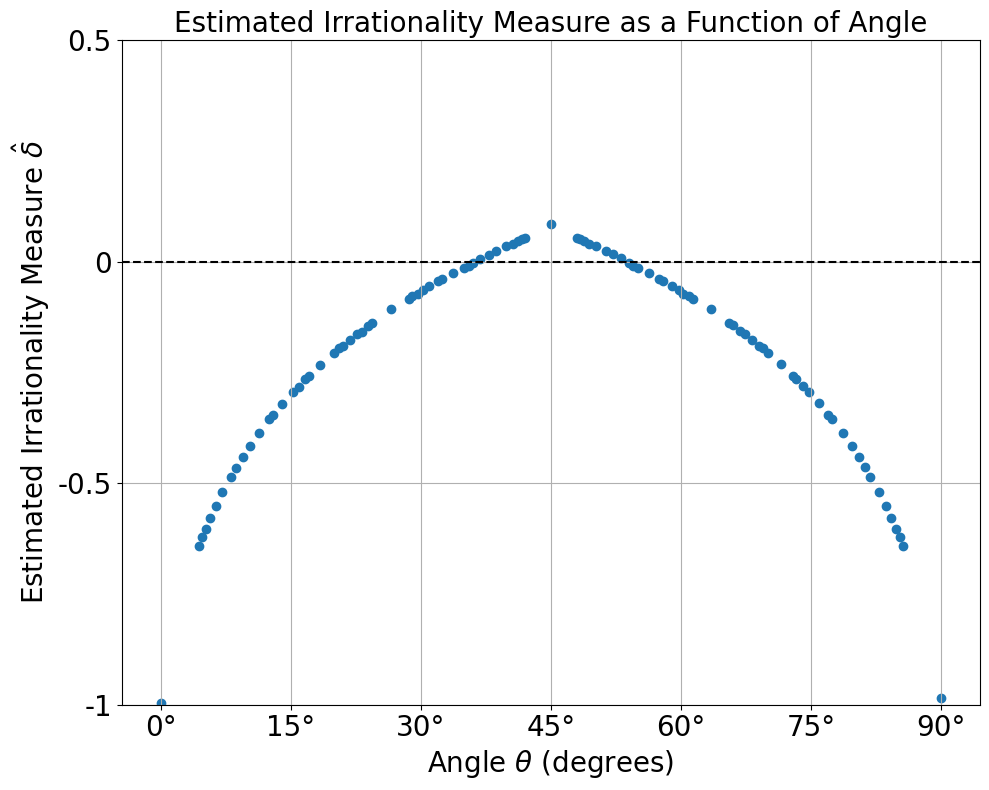}
    \end{subfigure}

    \vspace{0.25cm}

    % Row 2
    \begin{subfigure}[b]{0.49\textwidth}
        \centering
        \includegraphics[width=\textwidth]{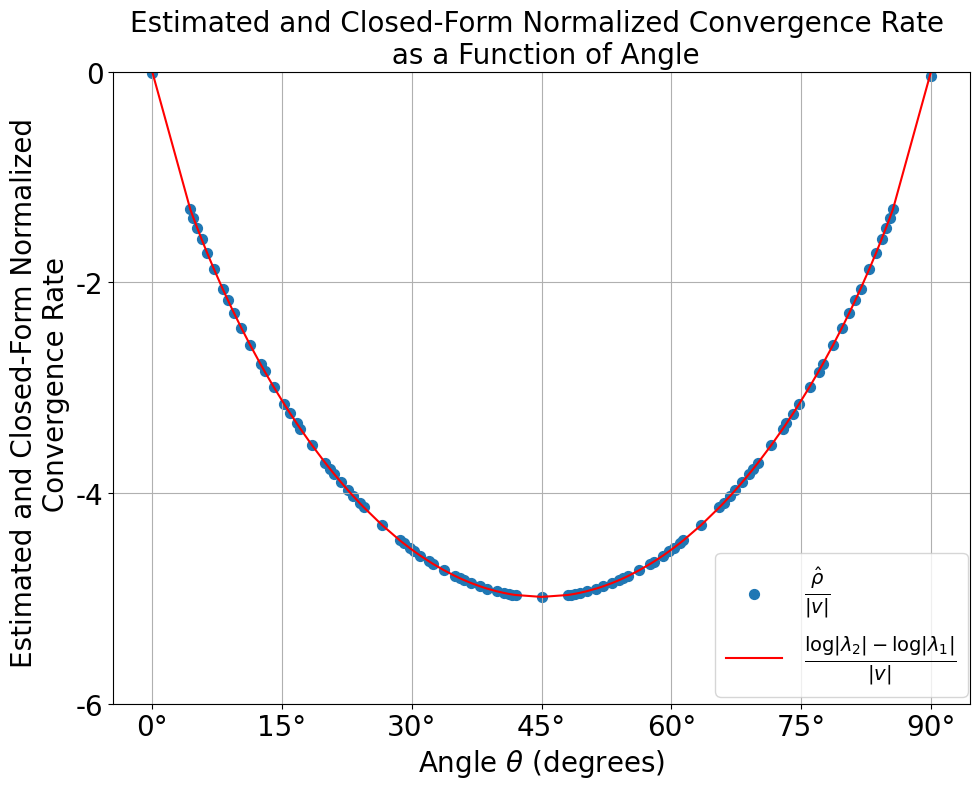}
    \end{subfigure}
    \hfill
    \begin{subfigure}[b]{0.49\textwidth}
        \centering
        \includegraphics[width=\textwidth]{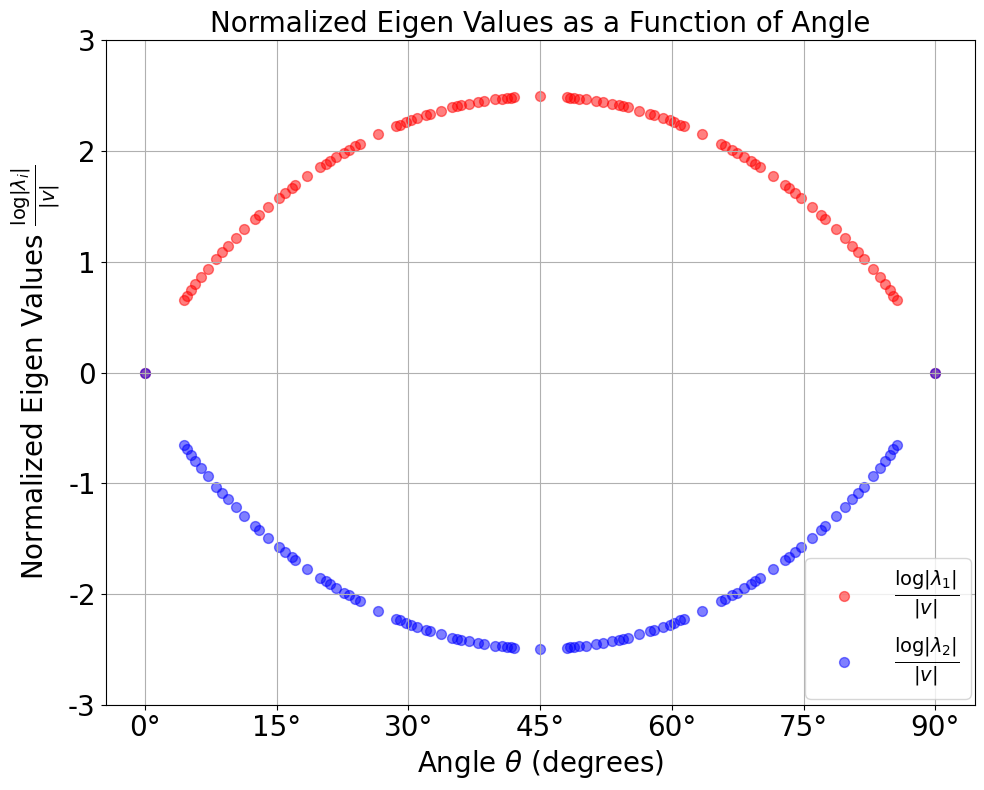}
    \end{subfigure}
    % Edited note: zeta(3) caption rewritten for consistent panel descriptions and axis wording.
    \caption{Asymptotic and arithmetic properties of the CMF from Example~\ref{ex:Zeta3CMF}, plotted versus the direction $\textbf{v}\in\N^2$. The horizontal axis in every panel is the angle of $\textbf{v}$ with the $x_1$-axis. \textbf{Top left:} the estimated limit $\hat{l}=\clL_{\textbf{x},\textbf{v}}^{\textbf{p},\textbf{q}}(1000)$; the red line marks $\zeta(3)$. \textbf{Top right:} the estimated irrationality measure $\hat{\delta}$; the black line marks the threshold $\delta=0$. \textbf{Bottom left:} the normalized convergence rate, with blue dots for the estimated values $\frac{\hat{\rho}(\textbf{v})}{\abs{\textbf{v}}}$ and a red curve for the closed-form value $\frac{\log\abs{\lambda_2(\textbf{v})}-\log\abs{\lambda_1(\textbf{v})}}{\abs{\textbf{v}}}$. \textbf{Bottom right:} the normalized eigenvalues $\frac{\log\abs{\lambda_i(\textbf{v})}}{\abs{\textbf{v}}}$.}
    \label{fig:zeta3panels}
\end{figure}
% Edited note: zeta(3) observations were lightly rephrased for clarity and flow.
Let us note 3 observations from this experiment:
\begin{enumerate}
    \item The limit, which by Theorem~\ref{thm:cmf_rat_convergence} is given by $l(\textbf{v}) = \frac{\textbf{p}^{\mathsf T}B(\textbf{x},\textbf{v})e_1}{\textbf{q}^{\mathsf T}B(\textbf{x},\textbf{v})e_1}$, appears to be constantly equal to $\zeta(3)$. This would imply that for all $\textbf{v}$ in the first quadrant, the vector $(\textbf{p}-\zeta(3)\textbf{q})^{\mathsf T}$ is orthogonal to the first column of $B(\textbf{x},\textbf{v})$.
    \item \textbf{The irrationality measure $\delta(\textbf{v})$ appears to be a continuous function of the angle.} This is potentially useful for the search for irrationality-proving linear forms. This behavior seems to hold generally, as is evident in the other examined CMFs. It is formalized in Conjecture \ref{conj:contin_delta}
    \item In fact, one can infer from the graph a closed form for the irrationality measure $\delta(\textbf{v})$. This is given, together with additional details, in Appendix~\ref{app:zeta3_delta}.
\end{enumerate}
% Edited note: 2F1 experiment introduction polished; appendix references and notation were standardized.
Next, we examine the CMF generated by the D-finite function $\pFq{2}{1}{2(x_1-x_2),2x_1+x_2}{x_1+2x_2}{-1}$. The generators of this CMF are given in Appendix~\ref{app:2F1_Subcmf}. It may also be viewed as an evaluation of a sub-CMF of $\clM^{{}_2F_1}$; see Appendix~\ref{app:Dual_Sub_CMF} for the definition of a sub-CMF.
We examine the sequences $\clL_{\textbf{x},\textbf{v}}^{\textbf{p},\textbf{q}}$ for this CMF with $\textbf{x}=(\frac{1}{3},-\frac{1}{3})$, $\textbf{p}=(1,0)$, $\textbf{q}=(0,1)$, and $\textbf{v}\in \set{(a,b)\in\Z^2 : \gcd(a,b)=1,\ \abs{(a,b)}<12}$. For each of the $264$ trajectories, we computed $\hat{l}(\textbf{v})$, $\hat{\delta}(\textbf{v})$, $\frac{\hat{\rho}(\textbf{v})}{\abs{\textbf{v}}}$, and $\frac{\log\abs{\lambda_i(\textbf{v})}}{\abs{\textbf{v}}}$ with $N=100$. These quantities are plotted as functions of the angle of $\textbf{v}$ with the $x_1$-axis in Figure~\ref{fig:2f1panels}.\\
\begin{figure}[htbp]
    \centering

    % Row 1
    \begin{subfigure}[b]{0.49\textwidth}
        \centering
        \includegraphics[width=\textwidth]{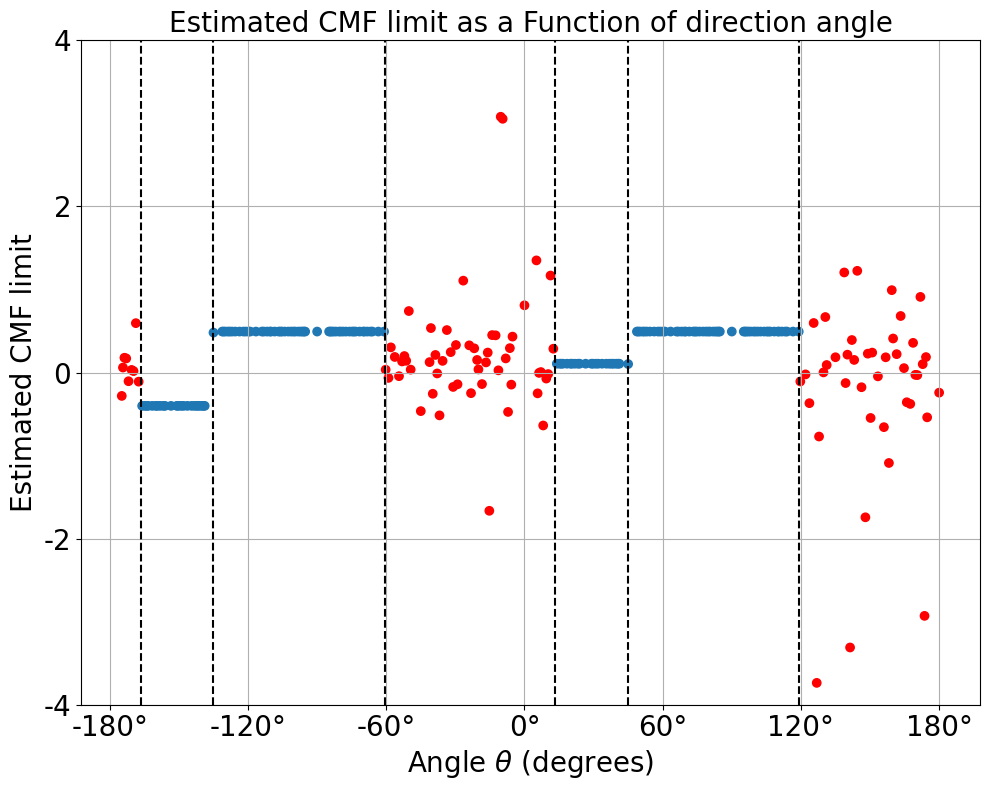}
    \end{subfigure}
    \hfill
    \begin{subfigure}[b]{0.49\textwidth}
        \centering
        \includegraphics[width=\textwidth]{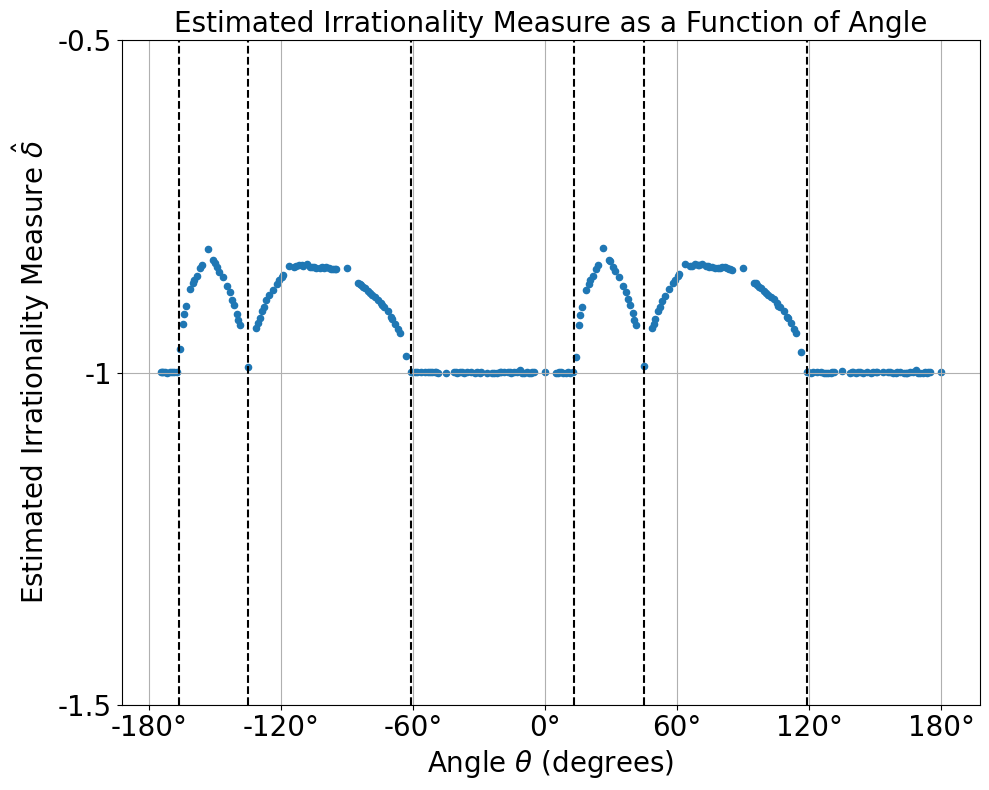}
    \end{subfigure}

    \vspace{0.25cm}

    % Row 2
    \begin{subfigure}[b]{0.49\textwidth}
        \centering
        \includegraphics[width=\textwidth]{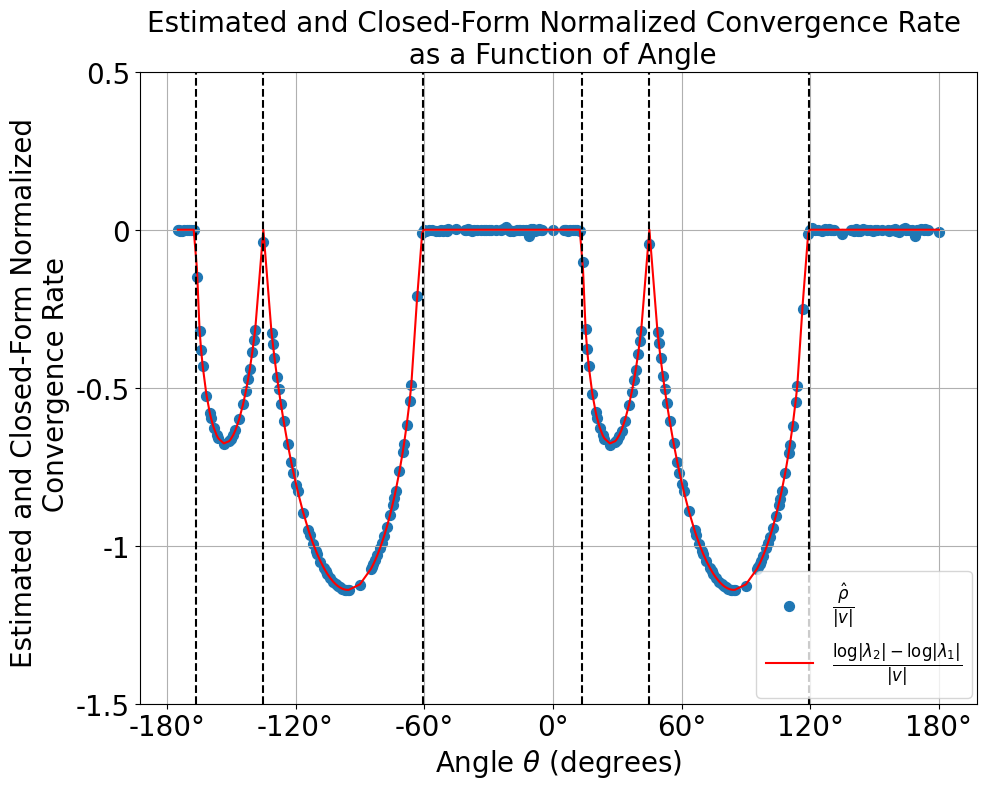}
    \end{subfigure}
    \hfill
    \begin{subfigure}[b]{0.49\textwidth}
        \centering
        \includegraphics[width=\textwidth]{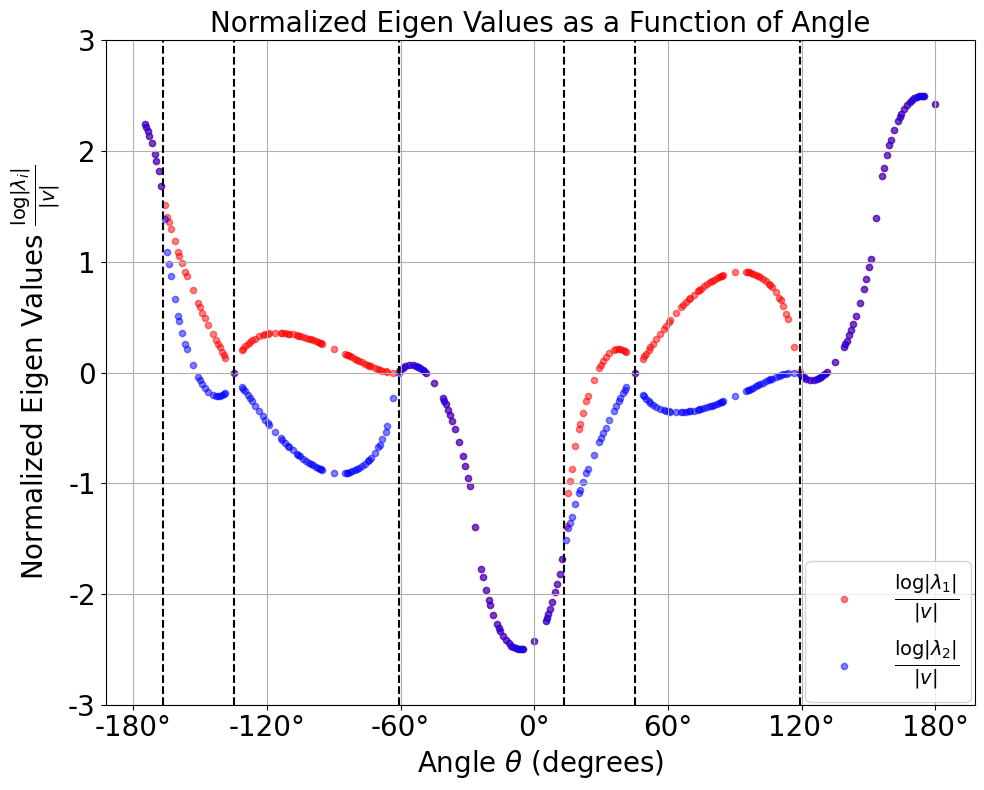}
    \end{subfigure}

    % Edited note: 2F1 caption rewritten for consistency with the other multi-panel figure captions.
    \caption{Asymptotic and arithmetic properties of the CMF from Appendix~\ref{app:2F1_Subcmf}, plotted versus the direction $\textbf{v}\in\Z^2$. In every panel, the horizontal axis is the angle of $\textbf{v}$ with the $x_1$-axis, and critical angles are indicated by black dashed lines. \textbf{Top left:} the estimated limit $\hat{l}=\clL_{\textbf{x},\textbf{v}}^{\textbf{p},\textbf{q}}(100)$; red points indicate directions for which the CMF ratio does not converge (see Figure~\ref{fig:2f1NonConverging}). \textbf{Top right:} the estimated irrationality measure $\hat{\delta}$. \textbf{Bottom left:} the normalized convergence rate, with blue dots for the estimated values $\frac{\hat{\rho}(\textbf{v})}{\abs{\textbf{v}}}$ and a red curve for the closed-form value $\frac{\log\abs{\lambda_2(\textbf{v})}-\log\abs{\lambda_1(\textbf{v})}}{\abs{\textbf{v}}}$. \textbf{Bottom right:} the normalized eigenvalues $\frac{\log\abs{\lambda_i(\textbf{v})}}{\abs{\textbf{v}}}$.}
    \label{fig:2f1panels}
\end{figure}
% Edited note: 2F1 observation list lightly polished for readability.
This experiment suggests a few further observations:
\begin{enumerate}
    \item The intervals where $\clL_{\textbf{x},\textbf{v}}^{\textbf{p},\textbf{q}}(n)$ does not converge, together with the points where the limits $\hat{l}(\textbf{v})$ jump, coincide with the points where the estimated convergence rate $\hat{\rho}(\textbf{v})$ is $0$, the estimated irrationality measure $\hat{\delta}(\textbf{v})$ is $-1$, and the eigenvalues have the same absolute value.
    \item On each interval where the convergence rate is nonzero, the limits $\hat{l}(\textbf{v})$ appear to be constant. This is reminiscent of the Stokes phenomenon and suggests that the space spanned by the first column of $B(\textbf{x},\textbf{v})$ remains constant on these intervals.
    \item Since we see the full $360\degree$ range here, the normalized convergence rate appears to be $2$-periodic. This can in fact be justified by examining the generators and using the fact that $r=2$.
    \item Once again, the irrationality measure appears to vary continuously.
\end{enumerate}
% Edited note: constant-CMF experiment setup rewritten; duplicated notation was removed and the axis description was standardized.
Finally, let us examine the CMF from Example~\ref{ex:Const3x3}, generated by
$$M_{1} =\begin{pmatrix}-9 & 2 & 2\\-38 & 11 & 4\\-24 & 4 & 7\end{pmatrix}\quad M_{2} = \begin{pmatrix}\frac{119}{6} & - \frac{7}{6} & - \frac{37}{6}\\\frac{343}{6} & - \frac{11}{6} & - \frac{119}{6}\\39 & - \frac{7}{3} & -12\end{pmatrix}$$
We examine the two sequences $\clL_{\textbf{x},\textbf{v}}^{\textbf{p}_1,\textbf{q}_1}$ and $\clL_{\textbf{x},\textbf{v}}^{\textbf{p}_2,\textbf{q}_2}$ for this CMF with $\textbf{x}=(0,0,0)$, $\textbf{p}_1=(1,0,0)$, $\textbf{q}_1=(0,1,0)$, $\textbf{p}_2=(-1,-1,2)$, $\textbf{q}_2=(3,-1,0)$, and $\textbf{v}\in \set{(a,b)\in\Z^2 : \gcd(a,b)=1,\ \abs{(a,b)}<12}$. For each of the $264$ trajectories, we computed $\hat{l}(\textbf{v})$, $\hat{\delta}(\textbf{v})$, $\frac{\hat{\rho}(\textbf{v})}{\abs{\textbf{v}}}$, and $\frac{\log\abs{\lambda_i(\textbf{v})}}{\abs{\textbf{v}}}$ with $N=200$. Writing $\hat{l}_1,\hat{\delta}_1,\hat{\rho}_1$ for the first sequence and $\hat{l}_2,\hat{\delta}_2,\hat{\rho}_2$ for the second, we plot these quantities as functions of the angle of $\textbf{v}$ with the $x_1$-axis in Figure~\ref{fig:C3x3panels}.
\begin{figure}[htbp]
    \centering

    % Row 1
    \begin{subfigure}[b]{0.49\textwidth}
        \centering
        \includegraphics[width=\textwidth]{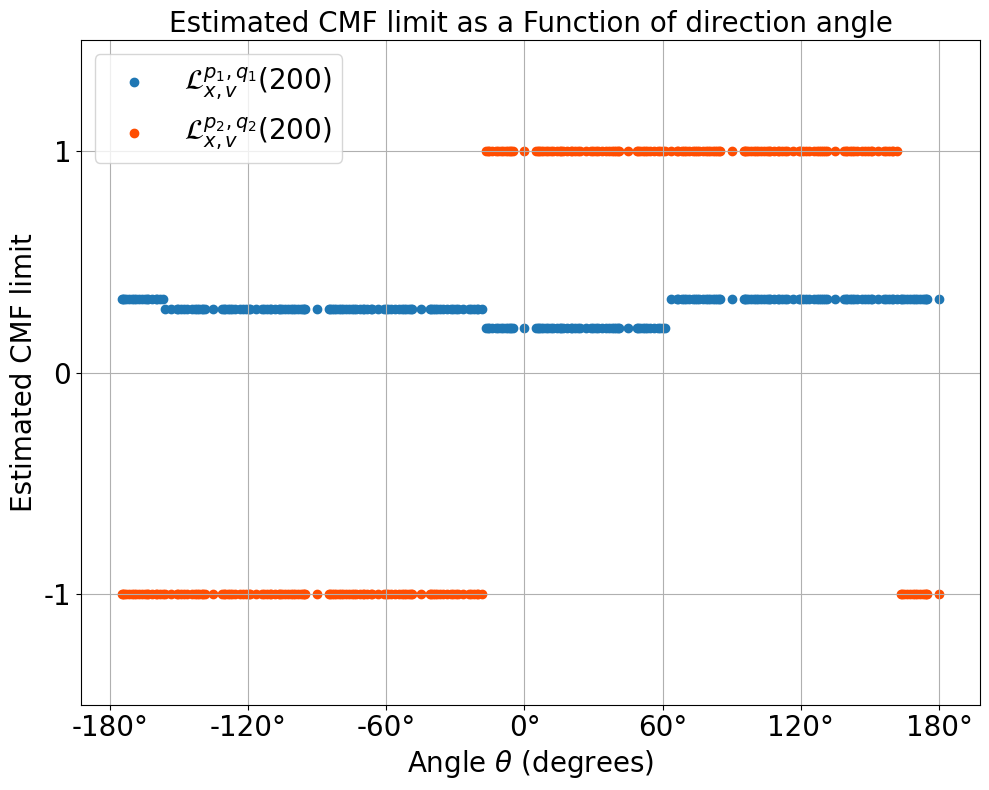}
    \end{subfigure}
    \hfill
    \begin{subfigure}[b]{0.49\textwidth}
        \centering
        \includegraphics[width=\textwidth]{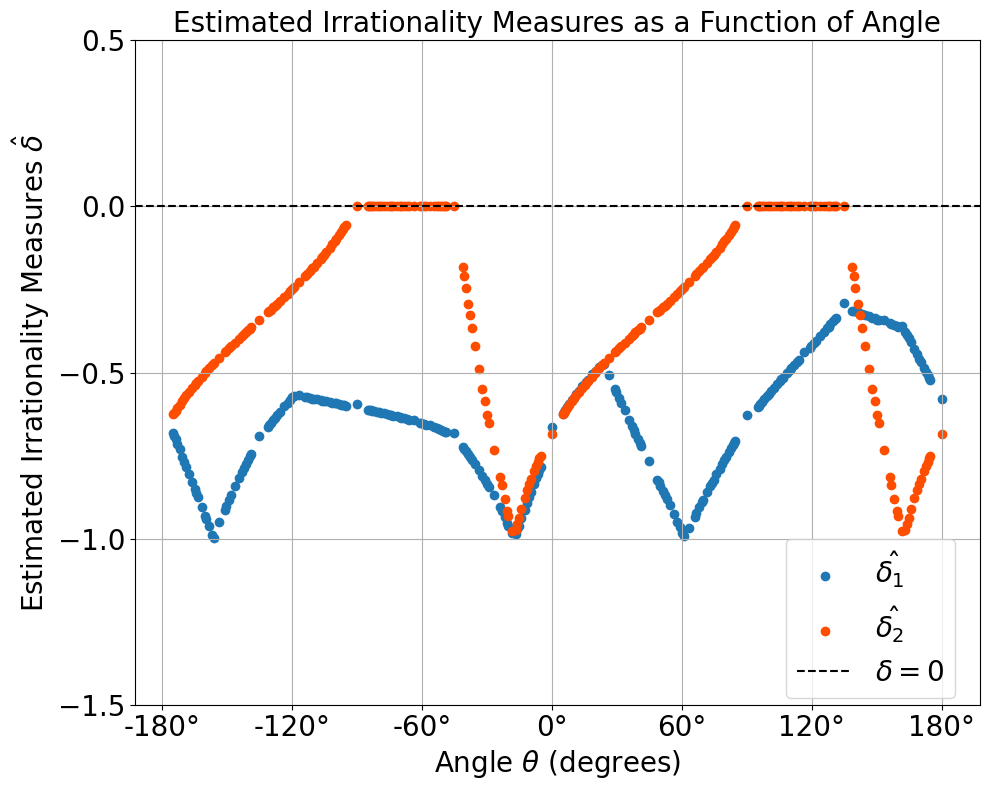}
    \end{subfigure}

    \vspace{0.25cm}

    % Row 2
    \begin{subfigure}[b]{0.49\textwidth}
        \centering
        \includegraphics[width=\textwidth]{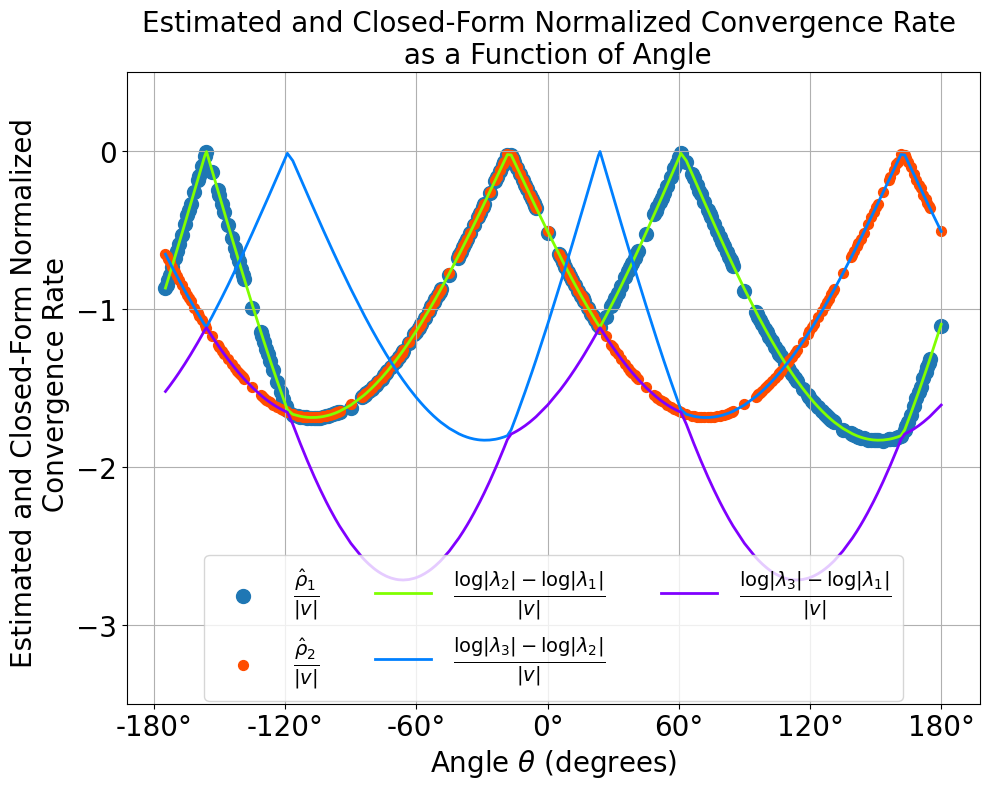}
    \end{subfigure}
    \hfill
    \begin{subfigure}[b]{0.49\textwidth}
        \centering
        \includegraphics[width=\textwidth]{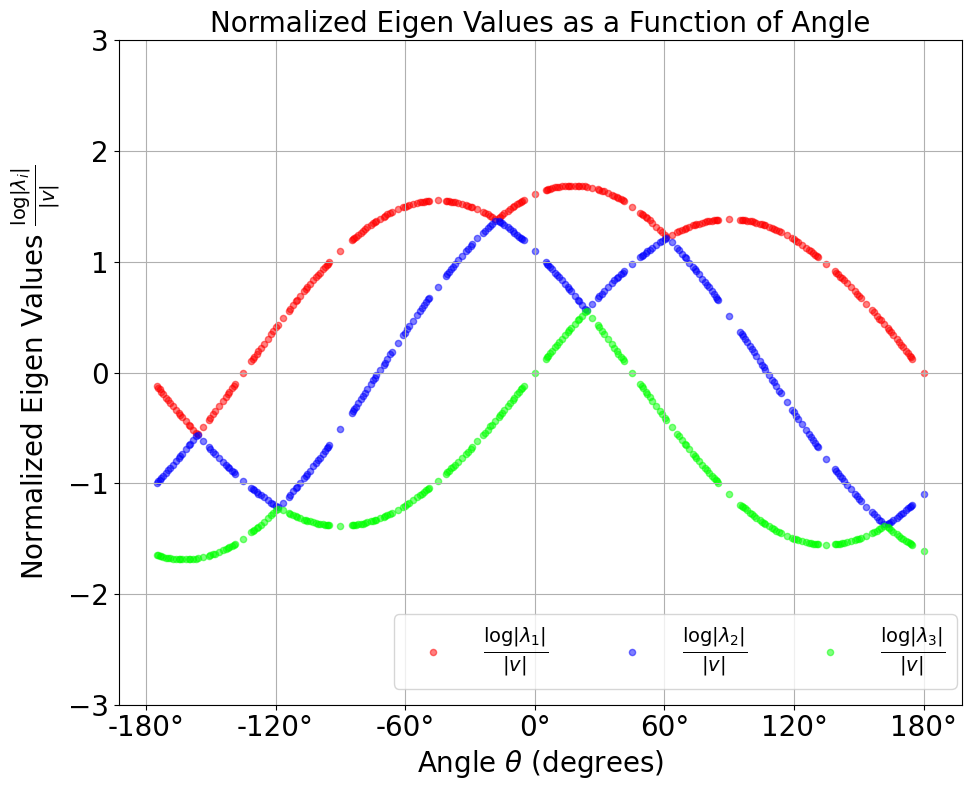}
    \end{subfigure}

    % Edited note: constant-CMF caption substantially rewritten for grammar, axis naming, and panel descriptions.
    \caption{Asymptotic and arithmetic properties of the constant CMF in Example~\ref{ex:Const3x3}, plotted versus the direction $\textbf{v}\in\Z^2$. In each panel, the horizontal axis is the angle of $\textbf{v}$ with the $x_1$-axis. Blue denotes the parameters of $\clL_{\textbf{x},\textbf{v}}^{\textbf{p}_1,\textbf{q}_1}(n)$, and orange denotes those of $\clL_{\textbf{x},\textbf{v}}^{\textbf{p}_2,\textbf{q}_2}(n)$. The eigenvalues and closed-form convergence rates are independent of the choice of $\textbf{p},\textbf{q}$. \textbf{Top left:} the estimated limits $\hat{l}_1,\hat{l}_2$. \textbf{Top right:} the estimated irrationality measures $\hat{\delta}_1,\hat{\delta}_2$. \textbf{Bottom left:} the normalized convergence rates, with dots indicating the estimated values $\frac{\hat{\rho}_i(\textbf{v})}{\abs{\textbf{v}}}$ and curves the closed-form values $\frac{\log\abs{\lambda_j}(\textbf{v})-\log\abs{\lambda_i}(\textbf{v})}{\abs{\textbf{v}}}$. There are three possible closed-form values, depending on which eigenvalues appear in the asymptotic expansion of the sequence. \textbf{Bottom right:} the normalized eigenvalues $\frac{\log\abs{\lambda_i}(\textbf{v})}{\abs{\textbf{v}}}$.}
    \label{fig:C3x3panels}
\end{figure}
% Edited note: constant-CMF observations were polished and the axis wording was normalized.
This experiment suggests:
\begin{enumerate}
    \item The normalized eigenvalues are sine functions of the angle of $\textbf{v}$ with the $x_1$-axis.
    \item The limits are constant on intervals where the convergence rate is nonzero.
    \item The irrationality measures appear to be continuous.
\end{enumerate}

% Edited note: conjecture lead-in sentence tightened and figure-reference spacing standardized.
We package the results of the experiments presented in Figures~\ref{fig:zeta3panels},~\ref{fig:2f1panels}, and~\ref{fig:C3x3panels}, together with others not shown here, into two main conjectures:
% Edited note: local-constancy conjecture statement polished; the angular-neighborhood wording was clarified.
\begin{conjecture}\label{conj:ConstLimits} \textbf{(Local constancy of the asymptotic flag).}\\
Given a CMF $\clM$ and a direction $\textbf{v}$ satisfying the hypotheses of Proposition~\ref{prop:continuousEigenvalues}, let $V$ be an angular neighborhood of $\textbf{v}$ on which the eigenvalues remain distinct in modulus. Then, for any $\textbf{u}\in V$,
\[
B(\textbf{x},\textbf{u}) = B(\textbf{x},\textbf{v})U \qquad U \text{ is invertible upper-triangular}
\]
\end{conjecture}
% Edited note: explanatory paragraph rewritten for smoother logic and a clearer simultaneous-diagonalization analogy.
Conjecture~\ref{conj:ConstLimits} implies the observed \textbf{local constancy of the limits}. It also implies continuity of the normalized convergence rate, since it yields the condition used in the proof of Proposition~\ref{prop:homog_and_contin_metrics}. One may think of this as an asymptotic analog of simultaneous diagonalization, but for the more delicate commutativity satisfied by CMF generators rather than ordinary commutativity.

We now turn to the irrationality measure.
\begin{conjecture}\label{conj:contin_delta} \textbf{(Continuous asymptotic height).}\\
Given a CMF $\clM$ and a direction $\textbf{v}$ satisfying the hypotheses of Theorem~\ref{thm:cmf_rat_convergence}, the irrationality-measure sequence, in the sense of Definition~\ref{def:irrationality_measure_of_seq}, of the CMF ratio $\clL_{\textbf{x},\textbf{v}}^{\textbf{p},\textbf{q}}(n)$ converges, and the resulting irrationality measure, denoted $\delta(\textbf{v})$, is continuous as a function of $\frac{\textbf{v}}{\abs{\textbf{v}}}$.
\end{conjecture}
Conjecture~\ref{conj:contin_delta} suggests that \textbf{optimization algorithms such as simulated annealing could be used to search for an irrationality-proving trajectory} $\textbf{v}$ in a CMF.
\clearpage
\section*{Acknowledgments}
This research received support through Schmidt Sciences, LLC.

The authors also wish to acknowledge the use of the RISC packages HolonomicFunctions package developed by Christoph Koutschan \cite{Koutschan2009} and the Asymptotics package by Manuel Kauers \cite{Kauers2011}. The first author is grateful to Frédéric Chyzak and Tali Monderer for their insightful discussions.

\section*{Declaration of generative AI and AI-assisted technologies in the manuscript preparation process}
During the preparation of this work the authors used ChatGPT 5.5 (OpenAI) for assistance in identifying potentially relevant literature, code generation, and brainstorming. After using this tool, the authors reviewed and edited the content as needed and take full responsibility for the content of the published article.
\printbibliography
\clearpage
\appendix
\section{Dual CMF, and Sub CMF}\label{app:Dual_Sub_CMF}
\begin{definition} \label{def:Dual_CMF}
    Let $\clM$ be a CMF of dimension $d$ rank $r$ over $K$. Then, the \textit{Dual CMF} of $\clM$, $\clM^*$ is a CMF of dimension $d$ rank $r$ over $K$ satisfying:
    $$\forall_{\textbf{v}\in\Z^d}\ \clM^*_\textbf{v} = (\clM_\textbf{v}^{-1})^\mathsf{T}$$
\end{definition}
\begin{definition}
    Let $\clM$ be a CMF of dimension $d$ rank $r$ over $K$. Let $\Lambda$ be a sub lattice of $\Z^d$, and let $(l_1,\dots,l_s)$ be a basis of $\Lambda$, and $(l_1,\dots,l_s,l_{s+1},\dots,l_{d})$ be a basis of $\Z^d$.\\
    Then, the matrices $\clM_{l_1},\dots,\clM_{l_s}$ can be rewritten as members of $\text{GL}_r(K(l_1,\dots,l_d))$, and in that form they generate a CMF of dimension $s$ rank $r$ over $K(l_{s+1},\dots,l_d)$. Such a CMF is called a \textit{Sub-CMF} of $\clM$.
\end{definition}
\begin{remark}
    Trajectory matrices $T_{\textbf{x},\textbf{v}}(n)$ are evaluations of the sub-CMF associated with $\Lambda = \set{k\textbf{v}:k\in\Z}$
\end{remark}
\section{CMF generated by $\pFq{2}{1}{2(x_1-x_2),2x_1+x_2}{x_1+2x_2}{-1}$} \label{app:2F1_Subcmf}
The CMF analyzed in Figure \ref{fig:2f1panels} is a CMF of dimension $2$ and rank $2$, generated by the following matrices:
\normalsize
$$M_1 = \begin{pmatrix}
    \frac{\left(x_{1} + 2 x_{2}\right) \left(3 x_{1} - 3 x_{2} + 2\right)}{\left(2 x_{1} + x_{2} + 1\right) \left(8 x_{1} - 8 x_{2} + 4\right)} & - \frac{\left(x_{1} + 2 x_{2}\right) \left(7 x_{1}^{2} + x_{1} \left(11 - 8 x_{2}\right) + x_{2}^{2} - 5 x_{2} + 4\right)}{\left(2 x_{1} + x_{2} + 1\right) \left(16 x_{1} - 16 x_{2} + 8\right)}\\\frac{\left(x_{1} + 2 x_{2}\right) \left(7 x_{1}^{2} - 8 x_{1} x_{2} + 5 x_{1} + x_{2}^{2} + x_{2}\right)}{\left(x_{1} - x_{2}\right) \left(2 x_{1} + x_{2}\right) \left(2 x_{1} + x_{2} + 1\right) \left(16 x_{1} - 16 x_{2} + 8\right)} & - \frac{\left(x_{1} + 2 x_{2}\right) \left(11 x_{1}^{3} + x_{1}^{2} \left(3 x_{2} + 14\right) + x_{1} \left(- 3 x_{2}^{2} + 20 x_{2} + 1\right) - 11 x_{2}^{3} + 2 x_{2}^{2} + 11 x_{2} - 2\right)}{\left(x_{1} - x_{2}\right) \left(2 x_{1} + x_{2}\right) \left(2 x_{1} + x_{2} + 1\right) \left(32 x_{1} - 32 x_{2} + 16\right)}
\end{pmatrix}$$
\normalsize
$$M_2 = \begin{pmatrix}
P_{1,1} & P_{1,2}\\
P_{2,1} & P_{2,2}
\end{pmatrix}$$
Where:
$$P_{1,1} = \frac{\left(x_{1} + 2 x_{2}\right) \left(x_{1} + 2 x_{2} + 1\right) \left(7 x_{1}^{2} - x_{1} \left(50 x_{2} + 19\right) + 79 x_{2}^{2} + 67 x_{2} + 10\right)}{\left(x_{1} - 4 x_{2}\right) \left(x_{1} - 4 x_{2} - 1\right) \left(x_{1}^{2} - x_{1} \left(8 x_{2} + 5\right) + 16 x_{2}^{2} + 20 x_{2} + 6\right)}$$
$$P_{1,2} =  - \frac{\left(2 x_{1} + 4 x_{2}\right) \left(x_{1} + 2 x_{2} + 1\right) \left(6 x_{1}^{3} - 3 x_{1}^{2} \left(15 x_{2} + 7\right) + x_{1} \left(90 x_{2}^{2} + 96 x_{2} + 22\right) - 51 x_{2}^{3} - 93 x_{2}^{2} - 49 x_{2} - 7\right)}{\left(x_{1} - 4 x_{2}\right) \left(x_{1} - 4 x_{2} - 1\right) \left(x_{1}^{2} - x_{1} \left(8 x_{2} + 5\right) + 16 x_{2}^{2} + 20 x_{2} + 6\right)}$$
$$P_{2,1} = \frac{\left(6 x_{1} + 12 x_{2}\right) \left(x_{1} + 2 x_{2} + 1\right) \left(2 x_{1}^{3} - 3 x_{1}^{2} \left(5 x_{2} + 2\right) + x_{1} \left(30 x_{2}^{2} + 27 x_{2} + 4\right) - x_{2} \left(17 x_{2}^{2} + 21 x_{2} + 6\right)\right)}{\left(x_{1} - 4 x_{2}\right) \left(x_{1} - x_{2}\right) \left(2 x_{1} + x_{2}\right) \left(x_{1} - 4 x_{2} - 1\right) \left(x_{1}^{2} - x_{1} \left(8 x_{2} + 5\right) + 16 x_{2}^{2} + 20 x_{2} + 6\right)} $$
$$P_{2,2} = - \frac{4 \left(x_{1} + 2 x_{2}\right) \left(- x_{1} + x_{2} + 1\right)^{2} \left(x_{1} + 2 x_{2} + 1\right) \left(4 x_{1}^{2} - 2 x_{1} \left(10 x_{2} + 3\right) + x_{2} \left(7 x_{2} + 3\right)\right)}{\left(x_{1} - 4 x_{2}\right) \left(x_{1} - x_{2}\right) \left(2 x_{1} + x_{2}\right) \left(x_{1} - 4 x_{2} - 1\right) \left(x_{1}^{2} - x_{1} \left(8 x_{2} + 5\right) + 16 x_{2}^{2} + 20 x_{2} + 6\right)} $$
\section{Irrationality Measure in $\zeta(3)$ CMF}\label{app:zeta3_delta}
% Edited note: appendix opening paragraph rewritten; cross-references and the setup sentence were standardized.
As mentioned in Section~\ref{sec:Experiments}, one can find conjectural closed forms for the irrationality measure $\delta(\textbf{v})$. We now present such a closed form for the $\zeta(3)$ CMF from Example~\ref{ex:Zeta3CMF}. First, compute the normalized eigenvalues symbolically:
\[
\frac{\log\abs{\lambda_1(\textbf{v})}}{\abs{\textbf{v}}} = 2\sin (\theta) \ln \abs{\tan \left(\frac{\theta}{2}\right)}+\cos (\theta) \ln \abs{ 1-\frac{2}{\sin (\theta)+1}} \quad , \quad \frac{\log\abs{\lambda_2(\textbf{v})}}{\abs{\textbf{v}}} = -\frac{\log\abs{\lambda_1(\textbf{v})}}{\abs{\textbf{v}}}
\]
Here $\theta$ is the angle of $\textbf{v}$ with the $x_1$-axis. With this notation, the following function matches the numerical observations:
% Edited note: conjectural delta formula annotation updated; notation changed from max(v) to max\{v_1,v_2\}.
\begin{equation}\label{eq:zeta3_delta_closed}
    \delta(\textbf{v}) = -1-\frac{\log\abs{\lambda_2(\textbf{v})}-\log\abs{\lambda_1(\textbf{v})}}{\log\abs{\lambda_1(\textbf{v})}-3\max\{v_1,v_2\}} \qquad \text{(conjecture)}
\end{equation}
% Edited note: explanatory paragraph clarified the new notation and the lower-bound interpretation.
where $\textbf{v}=(v_1,v_2)$ and $\max\{v_1,v_2\}$ denotes the maximum of its two entries. While not proven yet, this can be justified somewhat by noting that the entries of $\clM_{\textbf{v}}(\textbf{x})$ are integers when multiplied by ${\operatorname{lcm}(1,\dots,\max\{v_1,v_2\})}^3$, making~\eqref{eq:zeta3_delta_closed} a lower bound. The tightness of this lower bound, as well as the tightness of similar lower bounds generated by Rhin, Viola, and others~\cite{RhinViola1996,RhinViola2001,Fischler2003}, is left for future work.
\end{document}